\DeclareMathOperator{\perm}{Sym}
\DeclareMathOperator{\frat}{Frat} 
\DeclareMathOperator{\fit}{Fit}
\newcommand{\N}{\mathbb N}
\newtheorem{thm}{Theorem}
\newtheorem{cor}[thm]{Corollary}
 \newtheorem{lemma}[thm]{Lemma}
\newtheorem{question}[thm]{Question}
\numberwithin{equation}{section}
\renewcommand{\footnote}{\endnote}
\newcommand{\ignore}[1]{}\makeglossary
\begin{document}
	\bibliographystyle{amsplain}

\title{Generating hypergraphs of finite groups}

	\author{Andrea Lucchini}
\address{Universit\`a degli Studi di Padova\\  Dipartimento di Matematica \lq\lq Tullio Levi-Civita\rq\rq\\ Via Trieste 63, 35121 Padova, Italy\\email: lucchini@math.unipd.it}
\thanks{Project funded by the EuropeanUnion – NextGenerationEU under the National Recovery and Resilience Plan (NRRP), Mission 4 Component 2 Investment 1.1 - Call PRIN 2022 No. 104 of February 2, 2022 of Italian Ministry of University and Research; Project 2022PSTWLB (subject area: PE - Physical Sciences and Engineering) " Group Theory and Applications"}
\begin{abstract}In a recent paper Cameron, Lakshmanan and Ajith \cite{cam}
began an exploration of hypergraphs defined on algebraic structures, especially groups, to investigate whether this can add a new perspective.	Following their suggestions, we consider suitable hypergraphs encoding the generating properties of a finite group. In particular, answering a question asked in their paper, we classified the finite solvable groups whose generating hypergraph is the basis hypergraph of a matroid.
\end{abstract}
\maketitle

\section{Introduction}
An undirected hypergraph $H$ is a pair $H = (V,E)$ where $V$ is a set of elements called nodes
or vertices and $E$ is a non-empty subset of $\mathcal P(V)$ (power set of $V$) called hyperedges.

\

Following the suggestions and considerations of Cameron, Lakshmanan and Ajith in their recent paper \cite{cam}, we concentrate our attention on hypergraphs that encode  the generating properties of finite groups.

\

The generating  graph of a finite group $G$ is the graph defined on the elements of $G$ in such a way that two distinct vertices are connected by an edge if and only if they generate $G.$  It was defined by Liebeck and
Shalev in \cite{LS}, and has been further investigated by
many authors: see for example \cite{ bglmn,  bucr, spre, tra, CL3,  fu,  LM2, luma, colva} for some
of the range of questions that have been considered. Clearly the generating graph of $G$ is an edgeless graph if $G$ is not 2-generated. It is natural to think that for non-2-generated finite groups, the concept of generating graph can be generalized by making use of appropriate hypergraphs. We propose two possible definitions in this direction. For a finite group $G$ denote by $d(G)$ the smallest size of a generating set of $G$. We define the \emph{generating hypergraph} $\Gamma(G)$ associated to a finite group $G$ as the undirected hypergraph whose vertices are the elements of $G$ and where the hyperedges are the generating sets of $G$ of size $d(G).$ A minimal generating set of $G$ can have a size larger then $d(G)$, so we may also consider another hypergraph $\Delta(G)$, whose vertices are again the elements of $G$ but where the set of hyperedges consists of all the minimal generating sets. We will use for $\Delta(G)$ the name \emph{global generating hypergraph}. The finite groups $G$ with the property that $\Gamma(G)=\Delta(G)$ have been classified by P. Apise and B. Klopsch \cite{ak}: they are solvable, with a quite restrictive structure.

\

The very particular case when $G$ is cyclic is quite different from the general case; for example $\Gamma(G)$ does not contain hyperedges if we do not admit loops. This is the reason why we will in general exclude cyclic groups from our discussion. 

\

In particular we concentrate our attention on the following aspects. In Section \ref{connesso} we will investigate whether  
the hypergraphs obtained from $\Gamma(G)$ and $\Delta(G)$ by removing their isolated vertices are connected. We will prove that the answer is affirmative for the global generating hypergraph and for the generating hypergraph with the possible exclusion, in the second case, of the 2-generated finite groups. In Section \ref{riconosco} we will indicate how relevant structural properties of a finite group $G$ (for example the $p$-solvability property) can be detected from the knowledge of the global generating hypergraph $\Delta(G).$ Finally in Section 	\ref{matroidi}, answering a question asked \
in \cite{cam}, we classify the finite solvable groups whose generating hypergraph is the basis hypergraph of a matroid.

\section{Connectedness of the generating hypergraphs}\label{connesso}

A path in a hypergraph $H = (V,E)$ between two distinct vertices $x_1$ and $x_k$ is a sequence $x_1, e_1, . . . , x_{k-1}, e_{k-1}, x_k$ with the following properties:
\begin{enumerate} 
\item $x_1, . . . , x_k$ are distinct vertices,
\item $e_1, . . . , e_{k-1}$ are hyperedges (not necessarily distinct),
\item $x_j , x_{j+1} \in e_j$ for $j = 1, 2, \dots , k-1.$
\end{enumerate}
Two vertices $v,w \in V$ are said to be connected in $H$ if there exists a $(v,w)$-path in $H$. A hypergraph $H$ is connected
if every pair of vertices $v,w \in V$ is connected in $H$. An obstacle to the connectedness of the generating and global generating hypergraph of a finite group $G$ is the possible presence of isolated vertices (certainty the identify element of $G$ is an isolated vertex in both hypergraphs). It is so interesting to describe the set of the isolated vertices of $\Gamma(G)$ and $\Delta(G)$ and to investigate whether the hypergraphs $\tilde \Gamma(G)$ and $\tilde \Delta(G)$ obtained  from $\Gamma(G)$ and $\Delta(G)$ by removing the isolated vertices are connected. 

In the case of the global generating ipergraph, the answer to the question follows immediately from the results obtained in \cite{ind}.

\begin{thm}\label{indip}
	Let $G$ be a finite group. The set of isolated vertices of $\Delta(G)$ coincides with the Frattini subgroup $\frat(G)$ of $G$ and the hypergraph $\tilde \Delta(G)$ is connected.
\end{thm}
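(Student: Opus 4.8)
The plan is to reduce both assertions to known facts about \emph{independent sets} in finite groups, as collected in \cite{ind}. Recall that a subset $X$ of $G$ is independent if no proper subset of $X$ generates $\langle X\rangle$; equivalently, for every $x\in X$ we have $x\notin\langle X\setminus\{x\}\rangle$. A minimal generating set of $G$ is precisely an independent generating set, so the hyperedges of $\Delta(G)$ are exactly the maximal independent sets that happen to generate $G$, and in fact \emph{every} independent generating set (minimal by inclusion) is a hyperedge. The first step is to identify the isolated vertices. An element $g\in G$ is isolated in $\Delta(G)$ if and only if it lies in no minimal generating set of $G$. If $g\in\frat(G)$, then for any generating set $S$ we have $\langle S\setminus\{g\}\rangle=G$ by the defining property of the Frattini subgroup (non-generators), so $g$ belongs to no independent generating set; hence $\frat(G)$ consists of isolated vertices. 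Conversely, if $g\notin\frat(G)$, then by definition of $\frat(G)$ there is a maximal subgroup $M$ with $g\notin M$; picking a generating set of $G$ that uses $g$ and refining it to an independent one while keeping $g$ — possible because $g\notin M\supseteq$ (the subgroup generated without $g$) at the stage where $g$ is needed — shows $g$ lies in some minimal generating set. (This refinement argument is exactly the kind of statement established in \cite{ind}, so I would cite it rather than redo it.) This gives the first claim.

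For connectedness of $\tilde\Delta(G)$, the key input is the result from \cite{ind} that the ``independence graph'' or ``common generating set'' structure on $G\setminus\frat(G)$ is connected — more precisely, that any two elements outside $\frat(G)$ can be joined through a chain of minimal generating sets. I would phrase it as follows: fix a minimal generating set $B=\{b_1,\dots,b_d\}$ of $G$ (so $d=d(G)\le$ size of any minimal generating set, and $B$ is a hyperedge of $\Delta(G)$, hence all its elements lie in one common hyperedge and are mutually connected). It then suffices to show every vertex $g\notin\frat(G)$ is connected to some element of $B$. Given such $g$, by the first part there is a minimal generating set $S$ with $g\in S$; I claim $S$ and $B$ can be linked. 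The cleanest route is a \emph{replacement} argument: since $\langle S\rangle=G\supseteq\langle B\rangle$, one can exchange elements of $B$ into $S$ one at a time, each time passing through a set that is still an independent generating set, until one reaches a minimal generating set containing an element of $B$; consecutive sets in this chain share a vertex, giving a path in $\Delta(G)$ from $g$ to $B$, and thus $\tilde\Delta(G)$ is connected.

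The main obstacle — and the reason the proof ``follows immediately from \cite{ind}'' rather than being written out — is the replacement/exchange lemma: showing that from two minimal generating sets $S$ and $B$ of $G$ one can pass, by single-element substitutions through intermediate \emph{independent} generating sets, from one to the other (or at least link them by a short chain of hyperedges). This is not a matroid exchange axiom (indeed the whole point of Section \ref{matroidi} is that $\Gamma(G)$ is rarely a matroid), so it requires the specific group-theoretic arguments of \cite{ind}: typically one uses that if $S$ is an independent generating set and $b\notin\langle S\setminus\{s\}\rangle$ for some $s$, then $(S\setminus\{s\})\cup\{b\}$ is again an independent generating set, together with a counting or maximality argument to guarantee that a suitable $s$ always exists. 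Once that lemma is in hand, the verification above is routine bookkeeping. I would therefore structure the writeup as: (1) cite \cite{ind} for the characterization $g\notin\frat(G)\iff g$ lies in a minimal generating set, yielding the isolated-vertex claim; (2) cite \cite{ind} for the chain-of-replacements statement; (3) assemble the path from an arbitrary non-Frattini vertex to a fixed hyperedge, concluding connectedness of $\tilde\Delta(G)$.
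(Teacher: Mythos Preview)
Your proposal is correct and follows essentially the same strategy as the paper: both reduce the claims to the results of \cite{ind}. The paper's reduction is slightly slicker---it observes that the independence graph of \cite{ind} is precisely the graph on $G$ where two vertices are adjacent iff they lie in a common hyperedge of $\Delta(G)$, so isolated vertices and connectivity of $\tilde\Delta(G)$ coincide with those of the independence graph automatically, and one may invoke Lemma~4 and Theorem~1 of \cite{ind} directly---whereas you rederive part of the Frattini characterization and sketch a replacement-chain argument before appealing to \cite{ind}; but the substance is the same.
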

\begin{proof}
In \cite{ind} it is defined and investigated the independence graph of $G$: its vertices are the elements of $G$ and two  vertices $x_1$ and $x_2$	are joined by an edge if and only if $x_1\neq x_2$ and 
	there exists a minimal generating set of $G$ containing $x_1$ and $x_2$. Clearly the independence graph and the global generating hypergraph of $G$ have the same isolated vertices and $\tilde \Delta(G)$ is connected if and only if it is connected the subgraph of the independence graph of $G$ induced by its non-isolated vertices. So the conclusion follows from Lemma 4 and Theorem 1 in \cite{ind}.
\end{proof}

Moving from $\Delta(G)$ to $\Gamma(G)$ the role of the independence graph is played by the rank graph of $G$, in which two distinct vertices $x_1$ and $x_2$ 	are joined by an edge if and only if $x_1\neq x_2$ and 
there exists a generating set of $G$ of size $d(G)$ containing $x_1$ and $x_2$. Again the isolated vertices of the rank graph and of the generating hypergraph are the same, but it is difficult to give a good description of the set of these isolated vertices. It is in general larger of the Frattini subgroup (for example it contains all the elements of any normal subgroup $N$ of $G$ with the property that $d(G)=d(G/N)$) and it is not in general a subgroup. When $d(G)=2,$ \lq generating graph', \lq rank graph' and \lq generating hypergraph' are different names for the same object. It seems reasonable to conjecture that the subgraph of the generating graph of a 2-generated finite group $G$ induced by its non-isolated vertices  is connected. This has been proved under additional assumptions, for example if $G$ is solvable \cite{CL3} or if $G$ is a direct product of simple groups \cite{cl2}, but the question whether this is true in general is quite difficult and still widely open. However the situation is better when $d(G)\geq 3.$ In a recent joint paper with D. Nemmi \cite{rank} we proved that, if $d(G)\geq 3,$ then the subgraph of the rank graph of $G$ induced by its non-isolated vertices is connected. This implies the following theorem:

\begin{thm}Let $G$ be a finite group. If $d(G)\geq 3,$ then the hypergraph $\tilde\Gamma(G)$ is connected.
	\end{thm}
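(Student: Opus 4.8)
The plan is to derive the theorem directly from the connectedness of the rank graph proved by Lucchini and Nemmi in \cite{rank}, exploiting the fact, already noted above, that the rank graph of $G$ and the generating hypergraph $\Gamma(G)$ have the same isolated vertices. Write $R(G)$ for the rank graph of $G$. First I would make the coincidence of isolated vertices precise. A vertex $x$ is isolated in $\Gamma(G)$ exactly when $x$ lies in no hyperedge, that is, in no generating set of $G$ of size $d(G)$. Since $d(G)\geq 3>1$, any generating set of size $d(G)$ containing $x$ also contains some element $y\neq x$; hence $x$ is non-isolated in $\Gamma(G)$ if and only if $x$ belongs to some generating set of size $d(G)$, and the very same description is the definition of a non-isolated vertex of $R(G)$. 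Consequently $\tilde\Gamma(G)$ and the subgraph $\tilde R(G)$ of $R(G)$ induced on its non-isolated vertices have the same vertex set; note also that deleting isolated vertices changes none of the hyperedges of $\Gamma(G)$, so $\tilde\Gamma(G)$ has the same hyperedges as $\Gamma(G)$.

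Next I would lift paths from $\tilde R(G)$ to $\tilde\Gamma(G)$. Let $x,y$ be two distinct vertices of $\tilde\Gamma(G)$; by the previous paragraph they are non-isolated vertices of $R(G)$, so by \cite{rank} there is a path in $\tilde R(G)$ joining them, and taking a shortest such path we get $x=z_0,z_1,\dots,z_m=y$ with $z_0,\dots,z_m$ pairwise distinct and non-isolated and each $\{z_{i-1},z_i\}$ an edge of $R(G)$. By definition of the rank graph, for each $i\in\{1,\dots,m\}$ there is a generating set $e_i$ of $G$ of size $d(G)$ with $z_{i-1},z_i\in e_i$, i.e.\ a hyperedge $e_i$ of $\Gamma(G)$ containing both $z_{i-1}$ and $z_i$. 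Then the sequence $x=z_0,e_1,z_1,e_2,z_2,\dots,e_m,z_m=y$ satisfies the three conditions in the definition of a path in a hypergraph, and all its vertices are non-isolated, so $x$ and $y$ are connected in $\tilde\Gamma(G)$. As $x,y$ were arbitrary, $\tilde\Gamma(G)$ is connected.

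There is no real obstacle beyond the external input: the entire content of the argument is the theorem of \cite{rank}, and the remaining work is the routine bookkeeping above --- identifying the isolated vertices and observing that an edge of the rank graph is precisely the datum of a length-one path of $\Gamma(G)$ through a common hyperedge. It is worth remarking why the hypothesis $d(G)\geq 3$ cannot be dropped with this method: when $d(G)=2$ the objects $\Gamma(G)$, $R(G)$ and the generating graph coincide, and connectedness of the non-isolated part of the generating graph of a $2$-generated group is exactly the open problem recalled earlier, known only under extra hypotheses such as solvability. Thus $d(G)\geq 3$ is used precisely, and only, to be in a position to apply \cite{rank}.
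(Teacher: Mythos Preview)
Your proposal is correct and follows exactly the approach of the paper: the theorem is deduced from the connectedness result for the rank graph in \cite{rank}, using that $\Gamma(G)$ and the rank graph have the same isolated vertices and that an edge of the rank graph yields a hyperedge of $\Gamma(G)$ through its two endpoints. The paper simply asserts that the result of \cite{rank} ``implies the following theorem'' without spelling out the routine bookkeeping you provide.
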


\section{Recognizing a finite group by its generating hypergraphs}\label{riconosco}

For a given $t\in \mathbb N,$ denote by $P_G(t)$ the probability that an element $(g_1,\dots,g_t)$ of $G^t$ is a generating $t$-uple for $G$, i.e $G=\langle g_1,\dots,g_t\rangle.$ If $t\in \mathbb N,$ then $(g_1,\dots,g_t)$ is a generating
$t$-uple of $G$ if and only if $\{g_1,\dots,g_t\}$ contains an hyperedge of $\Delta(G).$ So from the knowledge of the global generating hypergraph $\Delta(G)$ we can determine $P_G(t)$ for every $t\in \mathbb N.$
When we know the value of $P_G(t)$ for every $t \in \mathbb N$ (or at least for $|G|$ different values of $t$),  we can deduce a great amount of information on the structure of the finite group $G$. Let us explain why.  For any finite group $G$ we may define a sequence of integers
$\{a_n(G)\}_{n \in \N}$ as follows:
$$
a_n(G)=\sum_{|G:H|=n}\mu _G(H).
$$
Here $\mu _G$ is the M\"obius function defined on the subgroup
lattice of $G$ as $\mu _G(G)=1$ and $\mu _G(H)=-\sum
_{H<K}\mu _G(K)$ for any $H<G$. 

\begin{lemma}\label{ang}
	Let $G$ be a finite group. Then the knowledge of the hypergraph $\Delta(G)$ allows to determine $a_n(G)$ for every $n\in \mathbb N.$
\end{lemma}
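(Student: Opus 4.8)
The key identity linking the probabilistic data to the Möbius numbers is the classical formula of P. Hall: for every $t \in \mathbb N$,
\[
P_G(t) = \sum_{H \leq G} \frac{\mu_G(H)}{|G:H|^t} = \sum_{n \in \mathbb N} \frac{a_n(G)}{n^t}.
\]
So the plan is first to recover $P_G(t)$ for all $t$ from $\Delta(G)$, then to invert this Dirichlet-series relation to extract the coefficients $a_n(G)$.

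First I would observe, as already noted in the paragraph preceding the statement, that a tuple $(g_1,\dots,g_t) \in G^t$ generates $G$ if and only if the set $\{g_1,\dots,g_t\}$ contains some hyperedge of $\Delta(G)$; since the hyperedges of $\Delta(G)$ are exactly the minimal generating sets, a subset of $G$ generates $G$ precisely when it contains one. Hence from $\Delta(G)$ alone we can decide, for any tuple, whether it is generating, and therefore compute $P_G(t) = |\{\text{generating } t\text{-tuples}\}|/|G|^t$ for every $t \in \mathbb N$. (We also read off $|G|$ from $\Delta(G)$, as the number of vertices.)

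Next I would set $n_1 < n_2 < \dots < n_r$ to be the indices $n$ with $a_n(G) \neq 0$; these are all divisors of $|G|$ (each such $n$ is the index of a subgroup), so $r \leq |G|$ and in particular $r$ is bounded by the number of divisors of $|G|$, which we know. Writing $P_G(t) = \sum_{i=1}^r a_{n_i}(G)\, n_i^{-t}$ and evaluating at $t = 1, 2, \dots, r$ gives a linear system for the unknowns $a_{n_i}(G)$ whose coefficient matrix is $(n_i^{-t})_{t,i}$. Factoring out $n_i^{-1}$ from each column, this is a Vandermonde matrix in the distinct values $n_i^{-1}$, hence invertible; so the $a_{n_i}(G)$ — and thus $a_n(G)$ for all $n$, the remaining ones being zero — are uniquely determined. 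Since we have shown $P_G(t)$ is computable from $\Delta(G)$ for every $t$, and in fact finitely many values suffice, the conclusion follows.

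The only genuinely delicate point is making the inversion effective rather than merely an existence statement: one must know a priori a bound on the number (or size) of the indices $n$ occurring, so that finitely many evaluations $P_G(1), \dots, P_G(N)$ pin down the sequence. This is handled by the remark that every $n$ with $a_n(G) \neq 0$ divides $|G|$ (indeed only subgroups $H$ with $\mu_G(H) \neq 0$ contribute, and their indices divide $|G|$), so taking $N = |G|$, or even $N$ equal to the number of divisors of $|G|$, suffices — and $|G|$ is visible from $\Delta(G)$. With that bound in hand the Vandermonde argument is routine.
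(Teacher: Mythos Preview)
Your argument is correct and matches the paper's approach: recover $P_G(t)$ for all $t$ from $\Delta(G)$, invoke Hall's identity $P_G(t)=\sum_n a_n(G)/n^t$, and invert a Vandermonde-type linear system to extract the coefficients $a_n(G)$. The only cosmetic difference is that the paper sets up the system directly over all $n\le m=|G|$ (with $m$ equations and $m$ unknowns), which sidesteps the slight circularity in your middle paragraph---where the $n_i$ are taken to be the indices of nonzero support, not known a priori---that you yourself flag and resolve in the final paragraph by passing to the divisors of $|G|$.
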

\begin{proof}Let $\sum _{n \in
	\N}{a_n(G)}/{n^s}$ be the Dirichlet generating function
associated with the sequence $\{a_n(G)\}_{n \in \N}$. As it was noticed by P. Hall \cite{hall}, for any $t \in
\N$, the value assumed on $t$ by this Dirichlet polynomial coincides with $P_G(t).$ Let $m=|G|.$ Notice
$(a_1(G),\dots,a_m(G))$ is a solution of the system
$$\begin{cases}
\frac{a_1(G)}{1}+\frac{a_2(G)}{2}+\dots+\frac{a_m(G)}{m}=P_G(1)\\
\frac{a_1(G)}{1}+\frac{a_2(G)}{2^2}+\dots+\frac{a_m(G)}{m^2}=P_G(2)\\
\dots \quad \dots \quad \dots \quad \dots \quad \dots\quad \dots \quad \dots\\
\frac{a_1(G)}{1}+\frac{a_2(G)}{2^m}+\dots+\frac{a_m(G)}{m^m}=P_G(m).\\
\end{cases}$$
Since
$$\det\begin{pmatrix}
1&2&\cdots&m\\
1&2^2&\cdots&m^2\\
\vdots&\vdots&\cdots&\vdots\\
1&2^m&\cdots&m^m
\end{pmatrix}\neq 0,$$
the system has a unique solution, hence the Dirichlet polynomial $P_G(s)$ is uniquely determined from the knowledge of $P_G(1), P_G(2),\dots, P_G(m)$ and, consequently, from the knowledge of the hypergraph $\Delta(G).$
\end{proof}

\begin{thm}\label{psolv}Let $G$ be a finite group and let $p$ be a prime divisor of $|G|.$ From the knowledge of the hypergraph $\Delta(G)$ it is possible to detect whether $G$ is $p$-solvable. In particular  it  can be recognized whether $G$ is solvable.
\end{thm}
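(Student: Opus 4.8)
The strategy is to recover, from the hypergraph $\Delta(G)$, enough arithmetic data about $G$ to apply a known characterization of $p$-solvability in terms of the M\"obius function or the Dirichlet polynomial $P_G(s)$. By Lemma \ref{ang} the knowledge of $\Delta(G)$ determines the full sequence $\{a_n(G)\}_{n\in\mathbb N}$, equivalently the Dirichlet polynomial $P_G(s)=\sum_{n}a_n(G)n^{-s}$. So the task reduces to: \emph{given $P_G(s)$ (as a formal Dirichlet polynomial, with the integers $a_n(G)$ and the divisors $n$ on which they are supported), decide whether $G$ is $p$-solvable.} First I would note that $P_G(s)$ factors as a product over the chief factors of $G$ of the ``local'' Dirichlet polynomials attached to each chief factor, as in the work of Gaschütz, Detomi--Lucchini and others; a chief factor is \emph{non-Frattini} precisely when it contributes a nontrivial factor, and abelian non-Frattini chief factors contribute factors of the shape $1-c/p^{ks}$ while a non-abelian chief factor with socle component $S$ contributes a factor whose behaviour is governed by $S$ and by the number of components.

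Second, I would isolate the non-abelian chief factors. The point is that $G$ fails to be $p$-solvable if and only if it has a non-abelian chief factor $A/B$ such that $p$ divides $|A/B|$ (equivalently, a composition factor that is a non-abelian simple group of order divisible by $p$). So I want to read off from $P_G(s)$ the multiset of non-abelian composition factors of $G$, or at least enough to detect whether one of them has order divisible by $p$. Here I would use that the Euler factor of $P_G(s)$ coming from a non-abelian chief factor $A/B$ with $A/B \cong S^k$ has its first nontrivial term at $n=|S|$ (since the minimal index of a supplement to such a chief factor is $|S|$, via a maximal subgroup corresponding to a point stabilizer), and more generally the denominators $n$ appearing in that factor are products of indices of subgroups of $\Aut(S)$-type; from the set of $n$'s with $a_n(G)\neq 0$ and the multiplicativity of the factorization one can in principle sieve out the orders of the non-abelian composition factors. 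In particular $p \mid |S|$ for some such $S$ can be detected.

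Third — and this is where I expect the real work to be — I would need to make the ``reading off'' step rigorous: the factorization of $P_G(s)$ into chief-factor contributions is not literally visible from the coefficient sequence alone, because different chief factors can contribute overlapping denominators, and abelian factors can interfere. The clean way around this is to use a theorem already in the literature (Damian--Lucchini, or Detomi--Lucchini, on the ``Dirichlet polynomial recognizes solvability''-type results): there it is shown that $G$ is solvable if and only if $P_G(s)$, written in lowest terms as a sum of finitely many terms $c/n^s$, has a specific multiplicative/Euler-product form with all ``local'' factors of abelian type; and there are analogous statements distinguishing $p$-solvability. So the honest plan is: (i) invoke Lemma \ref{ang} to get $P_G(s)$; (ii) cite the relevant characterization of ($p$-)solvability of $G$ purely in terms of $P_G(s)$ — the key input being that the presence of a non-abelian composition factor of order divisible by $p$ leaves a recognizable fingerprint in $P_G(s)$, e.g. a "non-abelian" Euler factor whose constant term involves that factor's order and whose structure cannot be mimicked by abelian factors; (iii) conclude. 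Taking $p$ to range over all primes dividing $|G|$ (and $|G|$ itself is recoverable, e.g. from $a_1(G)=1$ together with the largest $n$ with $a_n(G)\neq 0$, or simply as the number of vertices of $\Delta(G)$), solvability of $G$ is detected as ``$p$-solvable for every $p \bigm| |G|$''. The main obstacle, to repeat, is step (ii): pinning down exactly which feature of the Dirichlet polynomial certifies the existence of a non-abelian composition factor divisible by $p$, and citing it correctly; the rest is bookkeeping built on Lemma \ref{ang}.
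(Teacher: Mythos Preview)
Your plan is correct and ultimately lands on the same argument as the paper: invoke Lemma~\ref{ang} to recover the full coefficient sequence $\{a_n(G)\}$ (equivalently $P_G(s)$), then cite the Detomi--Lucchini and Damian--Lucchini results characterizing ($p$-)solvability purely in terms of $P_G(s)$. That is exactly what the paper does.

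The one point worth noting is that your exploratory discussion of chief-factor factorizations and ``reading off'' non-abelian composition factors is unnecessary scaffolding: the cited theorems already do that work and package the answer as a clean arithmetic criterion on the coefficients themselves. Concretely, the paper simply records that $G$ is $p$-solvable if and only if $a_{p^c d}(G)=a_{p^c}(G)\,a_d(G)$ whenever $\gcd(p,d)=1$, and $G$ is solvable if and only if $a_{rs}(G)=a_r(G)\,a_s(G)$ whenever $\gcd(r,s)=1$. So your step~(ii) can be made completely precise with no further analysis: the ``fingerprint'' of a non-abelian composition factor of order divisible by $p$ is exactly the failure of this $p$-multiplicativity, and there is nothing to sieve or disentangle by hand.
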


\begin{proof}The proof follows from Lemma \ref{ang} combined with \cite[Theorem 1]{dlsol} and \cite[Theorem 1.2]{ea}.  Indeed $G$ is $p$-solvable if and only if $a_{p^cd}(G)=a_{p^c}(G)a_d(G),$ whenever $p$ and $d$ are relatively prime. In particular $G$ is solvable if and only if $a_{rs}(G)=a_r(G)a_s(G),$ whenever $r$ and $s$ are relatively prime. 
\end{proof}

\begin{thm}
	Let $G$ be a finite non-abelian simple group and let $X$ be a finite group. If $\Delta(X)\cong \Delta(G),$ then $X\cong G.$
\end{thm}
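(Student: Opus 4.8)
The plan is to read off from the isomorphism $\Delta(X)\cong\Delta(G)$ a short list of invariants of $X$, to use these to force $X$ to be simple, and finally to identify $X$ with $G$ through the classification of finite simple groups.

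First I would record what the isomorphism gives essentially for free. A hypergraph isomorphism is in particular a bijection of the vertex sets, so $|X|=|G|$. By Lemma~\ref{ang} the recipe recovering the numbers $a_n(\cdot)$ from the hypergraph yields $a_n(X)=a_n(G)$ for every $n$, equivalently $P_X(s)=P_G(s)$. By Theorem~\ref{indip} the two hypergraphs have the same number of isolated vertices, so $|\frat(X)|=|\frat(G)|$; since $G$ is non-abelian simple we have $\frat(G)=1$, hence $\frat(X)=1$. Finally a non-abelian simple group is $p$-solvable for no prime $p$ dividing its order, so by Theorem~\ref{psolv} the same holds for $X$: it is $p$-solvable for no $p\mid|X|$, and in particular it is insolvable.

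The heart of the matter is to prove that $X$ is simple. Suppose not, and let $N$ be a minimal normal subgroup of $X$. Since $\frat(X)=1$, $N$ is not contained in some maximal subgroup of $X$; if $N$ is abelian this already makes $N$ complemented, and in general it shows that $X$ has a non-Frattini chief factor inside $\soc(X)$. Using the multiplicativity of the probabilistic zeta function along a chief series, $P_X(s)=\prod_i P_{X_i/X_{i-1}}(s)$ — where the factor attached to a Frattini chief factor is $1$ and each of the remaining factors is a nonconstant Dirichlet polynomial whose least non-trivial term occurs at the index of a suitable maximal subgroup — one derives strong structural constraints on $P_X(s)$. These must then be confronted with the known shape of $P_G(s)$ for a non-abelian simple group $G$: the coefficients $a_n(G)$ are supported on indices of intersections of maximal subgroups of $G$, which are large and arithmetically very constrained ($|G|$ itself is divisible by at least three primes), and $a_{|G|}(G)=\mu_G(1)\neq0$. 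Playing the two descriptions off against each other should exclude, in turn, a complemented abelian minimal normal subgroup, a minimal normal subgroup of the form $T^k$ with $k\geq2$, and more generally any proper nontrivial normal subgroup, so that $X$ itself is simple. I expect this step to be the main obstacle, since it is here that one has to exploit delicate information about $P_G(s)$ and not just the coarse invariants of the previous paragraph.

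Finally, suppose $X$ is non-abelian simple with $|X|=|G|$ and $P_X(s)=P_G(s)$. By the classification of finite simple groups, two non-isomorphic finite simple groups of the same order form either the pair $\{A_8,\psl(3,4)\}$ or a pair $\{\psp(2n,q),\mathrm{P\Omega}(2n+1,q)\}$ with $q$ an odd prime power and $n\geq3$. In each case the two groups have distinct coefficient sequences $(a_n)_n$, hence distinct probabilistic zeta functions: $A_8$ has a subgroup of index $8$ whereas the smallest index of a proper subgroup of $\psl(3,4)$ is $21$, so $a_8(A_8)\neq0=a_8(\psl(3,4))$, while the symplectic and orthogonal groups in the second family can be separated by a closer comparison of their lattices of maximal subgroups. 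This contradicts $P_X(s)=P_G(s)$ unless $X\cong G$, which is therefore the case.
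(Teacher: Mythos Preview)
Your outline mirrors the paper's proof almost exactly: recover $|X|=|G|$, the sequence $a_n(X)=a_n(G)$ via Lemma~\ref{ang}, and $\frat(X)=1$ via Theorem~\ref{indip}; then argue that $X$ is simple; finally separate the two classical pairs of simple groups of equal order. The last step is handled in the paper just as you do it, only with slightly different witnesses: the paper quotes $P_{A_2(4)}(2)\neq P_{A_3(2)}(2)$ and the computation in \cite[Section~7]{colva} for the symplectic/orthogonal pair, whereas you look at $a_8$ and gesture at the maximal-subgroup lattices. Either route is fine.

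The one place where your proposal is not yet a proof is the simplicity step. You correctly identify this as ``the main obstacle'' and sketch a strategy based on the factorisation of $P_X(s)$ along a chief series, but you do not carry it out: the phrases ``playing the two descriptions off against each other should exclude'' and ``a closer comparison'' are promissory, not arguments. In fact what you are trying to establish here --- that $P_X(s)=P_G(s)$ with $G$ non-abelian simple forces $X/\frat(X)$ to be non-abelian simple --- is precisely the main theorem of Damian--Lucchini \cite{easimple}, and the paper simply invokes that reference at this point. That result is not short: it requires a careful analysis of how non-Frattini chief factors contribute to the Dirichlet polynomial, and ruling out products $T^k$ with $k\geq2$ is delicate. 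So either cite \cite{easimple} as the paper does, or be prepared to reproduce a substantial argument; the paragraph you have written does not yet do the job.
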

\begin{proof}
By Lemma \ref{ang}, $a_n(X)=a_n(G)$ for every $n\in \mathbb N.$ It follows from \cite{easimple} that $X/\frat(X)$ is a finite non-abelian simple group. Moreover $\frat(G)=1,$ hence, by Theorem \ref{indip}, the identity element is the unique isolated vertex of $\Delta(G)$. Thus $\Delta(X)$ has the same property, and consequently $\frat(X)=1.$ Hence $X$ and $G$ are simple groups with the same order. By \cite[Theorem 5.1]{artin} $G$ and $X$ either are $A_2(4)$ and $A_3(2)$ or are $B_n(q)$ and $C_n(q)$ for some
$n\geq 3$ and some odd $q.$ However, a direct computation shows that
$P_2(A_2(4))\neq P_2(A_3(2))$, and in  \cite[Section 7]{colva} it is proved that $P_2(B_n(q))\neq P_2(C_n(q)).$ Hence  $X\cong G.$
\end{proof}

The procedure described in the proof of Theorem \ref{psolv} in order to recognize from the knowledge of the global generating hypergraph  $\Delta(G)$ whether the finite group $G$ is solvable is quite complicate. One has to determine the rational number $P_G(t)$ for several values of $t$ (this is in principle an elementary task but requires a boring enumeration task) and to solve a system with many equations and indeterminates. So it is natural to ask whether there is some other more directed criterion, which relies for example on the topological properties of the hypergraph $\Delta(G).$

\section{Basis hypergraph of a matroid}\label{matroidi}

An important class of hypergraphs consists of the basis hypergraphs of matroids. A basis of a matroid is a maximal independent set. The collection of matroid bases is characterised by the two properties:
\begin{enumerate}
	\item there is at least one basis;
\item (the Exchange Axiom): if $A,B$ are bases and $a \in A \setminus B,$ then there exists $b \in B\setminus A$
such that $(A \setminus \{a\}) \cup \{b\}$ is a basis.
\end{enumerate}

It follows from the Exchange Axiom that any two bases have the same number of elements;
that is, the basis hypergraph is uniform, i.e. all the hyperedges have the same size. So if the global generating graph of a finite group $G$ is the basis hypergraph of a matroid, then $\Delta(G)=\Gamma(G).$ The authors of \cite{cam} asked the following question:
\begin{question}
	Is it possible to describe groups whose generating hypergraph is the basis hypergraph of a matroid?
\end{question}

Motivated by the previous questions, we want to study the finite groups $G$ which satisfy the following property: if $G=\langle x_1,\dots,x_{d(G)}\rangle=\langle y_1,\dots,y_{d(G)}\rangle,$ then for every $1\leq i\leq d(G),$ there exists $j$ such that $$\langle x_1,\dots,x_{i-1},y_j,x_{i+1},\dots,x_{d(G)}\rangle=G.$$ We call this property the \lq minimal generating set exchange property\rq \ (MGSE property). If the previous property fails for the two generating sets
$X=\{x_1,\dots,x_d\}$ and $X=\{y_1,\dots,y_d\}$ we will say that $X$ and $Y$ fail the MGSE property.

\

We start our investigation with the following elementary observation.

\begin{lemma}\label{nofrat}
Let $G$ be a finite group. Then $G$ satisfies the MGSE property if and only if $G/\frat(G)$ satisfies the MGSE property. 
\end{lemma}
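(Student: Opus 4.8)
The plan is to reduce everything to two standard facts about the Frattini subgroup: first, that $\frat(G)$ consists of the non-generators of $G$, so that a subset $S\subseteq G$ generates $G$ if and only if its image $\overline{S}$ generates $\overline{G}:=G/\frat(G)$; and second, that consequently $d(G)=d(\overline{G})$. Write $d:=d(G)=d(\overline{G})$, and for $x\in G$ write $\overline{x}$ for its image in $\overline{G}$. A preliminary remark I would record is that any generating $d$-tuple of $G$ (or of $\overline{G}$) automatically has pairwise distinct entries, by minimality of $d$, and that the images $\overline{x}_1,\dots,\overline{x}_d$ of a generating $d$-set of $G$ again form a generating $d$-set of $\overline{G}$: they generate $\overline{G}$, and as a set they have at least $d(\overline{G})=d$ elements, hence exactly $d$. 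Thus passing to the quotient, or lifting back, neither collapses nor shrinks a generating $d$-set, so on both sides the MGSE property is quantified over essentially the same family of configurations.

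For the forward implication, suppose $G$ has the MGSE property and let $\overline{G}=\langle \overline{u}_1,\dots,\overline{u}_d\rangle=\langle \overline{v}_1,\dots,\overline{v}_d\rangle$ be two generating $d$-sets of $\overline{G}$. Choose arbitrary preimages $x_i,y_j\in G$ of $\overline{u}_i,\overline{v}_j$. By the first fact, $\{x_1,\dots,x_d\}$ and $\{y_1,\dots,y_d\}$ are generating sets of $G$, necessarily of size $d=d(G)$. Fix $i$. The MGSE property of $G$ supplies an index $j$ with $\langle x_1,\dots,x_{i-1},y_j,x_{i+1},\dots,x_d\rangle=G$; applying the projection $G\to\overline{G}$ yields $\langle \overline{u}_1,\dots,\overline{u}_{i-1},\overline{v}_j,\overline{u}_{i+1},\dots,\overline{u}_d\rangle=\overline{G}$, which is exactly what the MGSE property for $\overline{G}$ requires.

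For the reverse implication, suppose $\overline{G}$ has the MGSE property and let $G=\langle x_1,\dots,x_d\rangle=\langle y_1,\dots,y_d\rangle$. Passing to $\overline{G}$ gives generating $d$-sets $\{\overline{x}_1,\dots,\overline{x}_d\}$ and $\{\overline{y}_1,\dots,\overline{y}_d\}$ of $\overline{G}$. Fix $i$; the MGSE property of $\overline{G}$ produces $j$ with $\langle \overline{x}_1,\dots,\overline{x}_{i-1},\overline{y}_j,\overline{x}_{i+1},\dots,\overline{x}_d\rangle=\overline{G}$. Lifting, the subgroup $H:=\langle x_1,\dots,x_{i-1},y_j,x_{i+1},\dots,x_d\rangle$ satisfies $H\,\frat(G)=G$, and since $\frat(G)$ consists of non-generators this forces $H=G$. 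Hence $G$ has the MGSE property.

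Since the argument is nothing more than repeated use of ``$S$ generates $G$ iff $\overline{S}$ generates $\overline{G}$'' together with $d(G)=d(\overline{G})$, there is no genuine obstacle; the only point deserving a moment's care is the preliminary remark that images and preimages of generating $d$-sets are again generating $d$-sets, i.e.\ that the relevant cardinalities are preserved under $G\to\overline{G}$.
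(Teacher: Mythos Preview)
Your argument is correct and is precisely the intended elementary one: the paper itself omits the proof entirely, labelling the lemma an ``elementary observation'', and your write-up supplies exactly the standard justification via the non-generator characterization of $\frat(G)$ and the equality $d(G)=d(G/\frat(G))$. The care you take over cardinalities (that images and lifts of generating $d$-sets remain $d$-sets) is appropriate and the only point where anything could go wrong.
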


We will use the following result by Gasch\"utz \cite{g2} to prove that the MGSE property is inherited by the quotients.

\begin{lemma}\label{gas}
	Let $N$ be a normal subgroup of $G$ and
	$\langle g_1N,\dots,g_rN\rangle = G/N.$ If $r \geq d(G),$  then
	we can find $u_1 \dots u_r$ in $N$ so that $\langle g_1u_1,\dots,g_ru_r\rangle=G.$
\end{lemma}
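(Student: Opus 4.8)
The plan is to prove the statement by the classical counting argument of Gasch\"utz, working with the number $\phi_r(K)$ of ordered generating $r$-tuples of a finite group $K$. Write $C=\{(g_1u_1,\dots,g_ru_r):u_i\in N\}$ for the set of $r$-tuples of $G$ lying above the fixed tuple $(g_1N,\dots,g_rN)$ of $G/N$, so that $|C|=|N|^r$. The goal is to show that $C$ contains at least one generating tuple of $G$. I would achieve this by counting exactly how many tuples in $C$ generate $G$ and proving that this number does not depend on the particular generating tuple of $G/N$ we started from; the conclusion then reduces to a positivity statement.

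The key step is a M\"obius inversion over the subgroup lattice of $G$. For $K\le G$ set $g(K)=\#\{\mathbf{x}\in C:\langle\mathbf{x}\rangle\le K\}$ and $f(K)=\#\{\mathbf{x}\in C:\langle\mathbf{x}\rangle= K\}$, so that $g(K)=\sum_{L\le K}f(L)$ and, by the defining property $\sum_{K\ge L}\mu_G(K)=\delta_{L,G}$ of the M\"obius function $\mu_G$ recalled in Section~\ref{riconosco}, $f(G)=\sum_{K\le G}\mu_G(K)\,g(K)$. The crucial observation is that $g(K)=\prod_{i=1}^r|g_iN\cap K|$ vanishes unless $g_i\in KN$ for every $i$; since $(g_1N,\dots,g_rN)$ generates $G/N$, this forces $KN=G$, and in that case $g_iN\cap K$ is a coset of $N\cap K$ in $K$, so $g(K)=|N\cap K|^r$. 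Hence
\[
f(G)=\sum_{\substack{K\le G\\ KN=G}}\mu_G(K)\,|N\cap K|^r,
\]
an expression depending only on $G$, $N$ and $r$ and not on the chosen tuple above $G/N$. I expect this constancy of the fiber count, equivalently its reduction to the supplement-indexed M\"obius sum above, to be the main obstacle, the remaining steps being bookkeeping.

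Calling this common value $h$, I would then note that reduction modulo $N$ sends every generating $r$-tuple of $G$ to a generating $r$-tuple of $G/N$, and that the fiber over each such tuple contains exactly $h$ generating tuples of $G$; summing over the fibers gives $\phi_r(G)=h\cdot\phi_r(G/N)$. Finally, the hypothesis $r\ge d(G)$ guarantees that $G$ admits a generating $r$-tuple, so $\phi_r(G)>0$, while $\phi_r(G/N)>0$ because the given tuple already generates $G/N$; therefore $h=\phi_r(G)/\phi_r(G/N)>0$. Thus the fiber $C$ over $(g_1N,\dots,g_rN)$ contains a generating tuple, i.e.\ there exist $u_1,\dots,u_r\in N$ with $\langle g_1u_1,\dots,g_ru_r\rangle=G$, as required.
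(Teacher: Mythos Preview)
Your argument is correct and is precisely the classical counting proof due to Gasch\"utz. Note, however, that the paper does not supply its own proof of this lemma: it is quoted as a known result with a reference to Gasch\"utz \cite{g2}, so there is no in-paper proof to compare against. Your write-up is a faithful reconstruction of that original argument, including the key step that the fiber count $f(G)=\sum_{KN=G}\mu_G(K)\,|N\cap K|^r$ is independent of the chosen lift, whence $\phi_r(G)=h\cdot\phi_r(G/N)$ and $h>0$ follows from $r\ge d(G)$.
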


\begin{lemma}\label{quoz}
	Let $G$ be a finite group. If $G$ satisfies the MGSE propery, then $G/N$ satisfies MGSE property for every normal subgroup $N$ of $G.$
\end{lemma}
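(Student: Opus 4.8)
The plan is to transfer an exchange situation from the quotient $\overline{G}=G/N$ up to $G$ by lifting generating sets through Gasch\"utz's Lemma \ref{gas}, applying the MGSE property of $G$ there, and then reducing modulo $N$. Write $d=d(G/N)$ and $r=d(G)$, so $d\le r$. Suppose $\overline{G}=\langle \overline{x}_1,\dots,\overline{x}_d\rangle=\langle \overline{y}_1,\dots,\overline{y}_d\rangle$ and fix $i$ with $1\le i\le d$; the goal is to produce $j$ with $\langle \overline{x}_1,\dots,\overline{x}_{i-1},\overline{y}_j,\overline{x}_{i+1},\dots,\overline{x}_d\rangle=\overline{G}$.

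First I would pad and lift. Choose $g_1,\dots,g_d\in G$ with $g_kN=\overline{x}_k$, and set $g_{d+1}=\dots=g_r=1$. Then $\langle g_1N,\dots,g_rN\rangle=\overline{G}$ and $r=d(G)$, so Lemma \ref{gas} yields $u_1,\dots,u_r\in N$ with $\langle g_1u_1,\dots,g_ru_r\rangle=G$; put $x_k:=g_ku_k$, so that $x_kN=\overline{x}_k$ for $k\le d$ and $x_k=u_k\in N$ for $k>d$. Doing the same for the $\overline{y}_k$ gives $y_1,\dots,y_r\in G$ generating $G$, with $y_kN=\overline{y}_k$ for $k\le d$ and $y_k\in N$ for $k>d$. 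Since any repetition among $x_1,\dots,x_r$ (or among $y_1,\dots,y_r$) would exhibit a generating set of $G$ of size smaller than $d(G)$, these are genuine generating sets of $G$ of size $d(G)$, hence legitimate inputs for the MGSE property of $G$.

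Next I would apply MGSE in $G$ to $\{x_1,\dots,x_r\}$ and $\{y_1,\dots,y_r\}$ at the index $i$: there is $j\in\{1,\dots,r\}$ with $\langle x_1,\dots,x_{i-1},y_j,x_{i+1},\dots,x_r\rangle=G$. Reducing modulo $N$ and using $\overline{x}_k=1$ for $k>d$ gives $\langle \overline{x}_1,\dots,\overline{x}_{i-1},\overline{y}_j,\overline{x}_{i+1},\dots,\overline{x}_d\rangle=\overline{G}$. Finally I would rule out $j>d$: in that case $y_j\in N$, so $\overline{y}_j=1$ and the last equality reads $\langle \overline{x}_1,\dots,\overline{x}_{i-1},\overline{x}_{i+1},\dots,\overline{x}_d\rangle=\overline{G}$, a generating set of $\overline{G}$ of size at most $d-1<d(G/N)$, which is impossible. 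Hence $j\le d$, so $\overline{y}_j\in\{\overline{y}_1,\dots,\overline{y}_d\}$ is the required exchange witness, and $G/N$ satisfies the MGSE property.

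The main obstacle is really only the first step: one must pad the tuple up to length $d(G)$ so that Gasch\"utz applies (the raw $d=d(G/N)$ lifts may be too few), check that padding does not spoil $\langle g_1N,\dots,g_rN\rangle=\overline{G}$, and verify that the lifted sets have size exactly $d(G)$ so that the MGSE property of $G$ can be invoked on them. After that, the computation is bookkeeping, and it is precisely the minimality of $d(G/N)$ that forces $j\le d$ and makes the projection step close the argument. (The degenerate cases $N=1$ and $N=G$ are trivial and need no separate treatment.)
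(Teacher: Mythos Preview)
Your proof is correct and follows essentially the same route as the paper: pad the two generating tuples of $G/N$ with identities up to length $r=d(G)$, lift each to a generating tuple of $G$ via Gasch\"utz's Lemma~\ref{gas}, and then transfer the exchange between $G$ and $G/N$. The paper phrases this contrapositively (lifting a failure of MGSE in $G/N$ to a failure in $G$), while you argue directly; your explicit handling of the case $j>d$ via the minimality of $d(G/N)$ corresponds exactly to the paper's implicit observation that the padded lifts $n_{u+1},\dots,n_v\in N$ also fail to replace $x_1m_1$ in the quotient.
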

\begin{proof}
	Let $N\unlhd G,$ $u=d(G/N), v=d(G).$ Suppose that the two minimal generating sets $\tilde X=\{x_1N,\dots,x_uN\}$ and $\tilde Y=\{y_1N,\dots,y_uN\}$ fail the MGSE property. We may assume that
	$\{y_jN,x_2N,\dots,x_uN\}\neq G/N$ for every $1\leq j\leq u.$ 
	By Lemma \ref{gas}, there exist $m_1,\dots,m_v,n_1,\dots,n_v \in N,$ such that
	$$G=\langle x_1m_1,\dots,x_um_u,m_{u+1},\dots,m_v\rangle=
	\langle y_1n_1,\dots,y_un_u,n_{u+1},\dots,n_v\rangle.
	$$
	We claim that the minimal generating sets $$X=\{ x_1m_1,\dots,x_um_u,m_{u+1},\dots,m_v\}\text{ and } Y=\{y_1n_1,\dots,y_un_u,n_{u+1},\dots,n_v\}$$ of $G$ fail the MGSE property. Indeed, for every $y\in Y,$ 
	$$\langle yN, x_2m_2N,\dots,x_um_uN,m_{u+1}N,\dots,m_vN\rangle=
	\langle yN, x_2N,\dots,x_uN\rangle\neq G/N.\qedhere
	$$
\end{proof}

By \cite[Theorem 5.1]{cam}, a finite $p$-group satisfies the MGSE property. We are going to prove that this is the unique case in which a non-cyclic finite nilpotent group satisfies the MGSE property.

\begin{lemma}\label{ciclo}Let $G$ be a non-cyclic finite group which satisfies the MGSE property. If $N$ is a normal subgroup of $G$ and $G/N$ is cyclic, then $G/N$ is a $p$-group. In particular $G/G^\prime$ is a $p$-group.
\end{lemma}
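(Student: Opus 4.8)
The plan is to reduce to the case where $G$ itself is cyclic by factoring out a suitable larger normal subgroup, and then to derive a contradiction if the cyclic quotient is not a $p$-group. First I would replace $G$ by $G/N$: by Lemma~\ref{quoz} the quotient $G/N$ still satisfies the MGSE property, and $G/N$ is cyclic by hypothesis, so we are reduced to proving that a cyclic group satisfying the MGSE property — obtained as a proper quotient of a non-cyclic group satisfying MGSE — must be a $p$-group. The subtlety is that a cyclic group in isolation trivially satisfies the MGSE property (with $d=1$, any generator can be exchanged for any other generator), so the hypothesis that $G$ is \emph{non-cyclic} must be used essentially: the exchange has to work inside $G$, not just inside $G/N$.

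The key step is therefore to use the non-cyclicity of $G$ directly. Suppose $G/N$ is cyclic but not a $p$-group, say $|G/N|$ is divisible by two distinct primes $p$ and $q$. Since $G$ is non-cyclic, $d(G)\geq 2$. Write $G/N=\langle gN\rangle$. The idea is to construct two minimal generating sets of $G$ of size $d(G)$ that fail the MGSE property, using the fact that $G/N$ being cyclic but with composite order gives us room to pick ``complementary'' generators: choose $a,b$ so that $aN$ generates the $p$-complement part and $bN$ the $q$-complement part of $\langle gN\rangle$, i.e. $\langle aN\rangle$ and $\langle bN\rangle$ are proper but together generate $G/N$. Using Lemma~\ref{gas}, lift these to elements of $G$ and pad out to generating sets of size $d(G)$; the first coordinate of one set can be exchanged only for elements whose image modulo $N$ lies in the ``wrong'' cyclic subgroup, and since those do not generate $G/N$ together with the remaining images, the lifted sets cannot generate $G$ either. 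This contradicts the MGSE property of $G$.

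The main obstacle I expect is the bookkeeping of the padding elements and making sure the two generating sets one constructs really have size exactly $d(G)$ and really are minimal — one must be careful that after lifting via Gasch\"utz and adding $d(G)-1$ further generators from $N$ (or from a carefully chosen set), the resulting set is not accidentally a generating set of smaller size, and that the image modulo $N$ of every element of the second set, when substituted into the first position of the first set, still fails to generate $G/N$. Concretely, one wants: for every $y$ in the second set, $\langle yN,\ (\text{images of the other elements of the first set})\rangle \neq G/N$, which forces these images to lie in a proper subgroup of the cyclic group $G/N$; choosing the first set so that the images of all its elements except the first lie in $\langle g^{\,|G/N|/p}N\rangle$ (the unique subgroup of index $p$), the exchange can succeed only if $yN$ supplies the missing $p$-part, but by construction every $y$ in the second set has $yN$ inside the index-$p$ subgroup as well. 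Pushing this through gives the contradiction. Finally, applying the result with $N=G'$ yields that $G/G'$ is a $p$-group, since $G/G'$ is abelian and hence — being a quotient witnessing the MGSE property — its being non-cyclic would already contradict the first part applied to $G/G'$ in place of $G/N$; more directly, $G/G'$ is cyclic precisely when $G$ is (using non-cyclicity one checks $G/G'$ could still be cyclic), and in that case the first assertion gives that it is a $p$-group.
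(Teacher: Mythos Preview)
Your core idea is exactly the paper's: write a generator of $G/N$ as $gN=(g_1N)(g_2N)$ with neither factor generating $G/N$, then use Gasch\"utz to lift the two tuples $(gN,1,\dots,1)$ and $(g_1N,g_2N,1,\dots,1)$ to generating $d(G)$-sets $A=\{gm_1,m_2,\dots,m_d\}$ and $B=\{g_1n_1,g_2n_2,n_3,\dots,n_d\}$ of $G$ with all $m_i,n_i\in N$, and observe that no $b\in B$ can replace $gm_1$ in $A$ since $\langle bN\rangle\ne G/N$ for every $b\in B$.

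However, your more detailed description contains a slip that would break the argument: you write that ``every $y$ in the second set has $yN$ inside the index-$p$ subgroup as well.'' If that were so, the image of $B$ modulo $N$ would lie in a proper subgroup of $G/N$ and $B$ could not generate $G$. The point is the opposite: the images $g_1N,g_2N$ \emph{together} generate $G/N$ (so $B$ does generate $G$), but \emph{individually} neither does, and since the remaining images $m_2N,\dots,m_dN$ of $A$ are trivial, substituting any single $b\in B$ into the first slot of $A$ yields $\langle bN\rangle\ne G/N$. There is no need to arrange anything about an index-$p$ subgroup; the padding elements simply lie in $N$.

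Your derivation of the ``In particular'' clause is also off: the claim that ``$G/G'$ is cyclic precisely when $G$ is'' is false (e.g.\ $S_3$). The correct one-line argument is: if $|G/G'|$ were divisible by two distinct primes $p$ and $q$, then the abelian group $G/G'$ would surject onto $C_{pq}$, giving $G$ a cyclic quotient that is not a prime-power group, contrary to the first assertion.
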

\begin{proof}
	Suppose, by contradiction, that $G/N=\langle gN\rangle$ and that $|gN|$ is not a prime-power. Then there exist $g_1,g_2\in G$ such that
	$g=g_1g_2$ and neither $g_1N$ nor $g_2N$ generates $G/N.$ Let $d=d(G).$ By Lemma \ref{gas}, there exist $m_1,\dots,m_d,n_1,\dots,n_d\in N$ such that
	$$G=\langle gm_1,m_2,\dots,m_d\rangle=\langle g_1n_1,g_2n_2,n_3,\dots,n_d\rangle.$$ Then  $A=\{gm_1,m_2,\dots,m_d\}$ and $B=\{g_1n_1,g_2n_2,n_3,\dots,n_d\}$ fail the MGSE property. Indeed there is no $b\in B$ such that $G=\langle b,m_2,\dots,m_d\rangle.$ By Lemma \ref{quoz} this contradicts the fact that $G$ satisfies the MGSE property.
\end{proof}

\begin{cor}\label{nilpo}
Let $G$ be a finite non-cyclic finite nilpotent group. Then $G$ satisfies the MGSE property if and only if $G$ is a $p$-group.
\end{cor}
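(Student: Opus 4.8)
The plan is to deduce this from Lemma~\ref{ciclo} together with the known $p$-group case from \cite{cam} and Lemma~\ref{nofrat}. One direction is already quoted: if $G$ is a (non-cyclic) finite $p$-group, then $G$ satisfies the MGSE property by \cite[Theorem 5.1]{cam}. So the content is the forward direction: a non-cyclic finite nilpotent group with the MGSE property must be a $p$-group.

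First I would write $G = P_1 \times \cdots \times P_k$ as the direct product of its Sylow subgroups, and assume for contradiction that $k \ge 2$, so that at least two distinct primes $p \ne q$ divide $|G|$. The quotient $G/G'$ is then a non-cyclic abelian group whose order is divisible by both $p$ and $q$ (since $G$ non-cyclic forces some $P_i$ non-cyclic, but in any case $G/G'$ is divisible by every prime dividing $|G|$). Now Lemma~\ref{ciclo} says that every cyclic quotient of $G$ is a $p$-group — in particular $G/G'$ itself, being abelian, has a cyclic quotient of order $pq$ (take the product of a subgroup of index $p$ in the $p$-part and one of index $q$ in the $q$-part of $G/G'$), and this quotient of $G$ is cyclic but not a prime-power order group, contradicting Lemma~\ref{ciclo}. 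Hence $k = 1$ and $G$ is a $p$-group.

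Actually the cleanest route is to invoke the last sentence of Lemma~\ref{ciclo} directly: \lq\lq In particular $G/G'$ is a $p$-group.\rq\rq\ Since $G$ is nilpotent, $|G|$ and $|G/G'|$ have the same set of prime divisors (a nilpotent group has nontrivial image in each of its Sylow quotients, all of which are abelianized nontrivially), so $G/G'$ being a $p$-group forces $|G|$ to be a power of $p$, i.e.\ $G$ is a $p$-group. Combining this with the $p$-group direction from \cite{cam} gives the equivalence. I do not expect any real obstacle here; the only point requiring a line of justification is that a nilpotent group and its abelianization have the same prime divisors, which follows because $G$ nilpotent implies $G' \le \frat(G)$, so $|G/\frat(G)|$ and $|G/G'|$ have the same primes, and $\frat(G)$ contains no information about which primes divide $|G|$ beyond what $G/\frat(G)$ already sees — equivalently, one just notes that the Sylow $q$-subgroup of $G$ for $q \ne p$ would survive nontrivially into $G/G'$.
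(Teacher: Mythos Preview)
Your argument is correct and follows essentially the same route as the paper: invoke the ``in particular $G/G'$ is a $p$-group'' clause of Lemma~\ref{ciclo}, then use that for a nilpotent group the abelianization sees every prime dividing $|G|$ (equivalently, $G/G' \cong \prod_i P_i/P_i'$ with each factor nontrivial), forcing $G$ itself to be a $p$-group. The reference to Lemma~\ref{nofrat} in your opening sentence is not actually used and can be dropped, and the first paragraph's detour through a cyclic quotient of order $pq$ is superfluous once you adopt the ``cleanest route'' --- but none of this affects correctness.
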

\begin{proof}It is easy to see that a finite $p$-group satisfies the MGSP propery (see \cite[Theorem 5.1]{cam}). Conversely let $G$ be a finite non-cyclic nilpotent group which satisfies the MGSE property. By Lemmas \ref{quoz} and \ref{ciclo} $G/G^\prime$ is a $p$-group. This implies that $G$ itself is a $p$-group.
\end{proof}

The next results aim to obtain a complete classification of the finite solvable groups satisfying the MGSE property. 

\begin{lemma}\label{cog}
	Let $G$ be a 2-generated solvable non-nilpotent group. If $G$ satisfies the MGSE property, then $G/\frat(G)\cong V\rtimes \langle x \rangle,$ where $x$ has prime order and $V$ is a faithful irreducible
	$\langle x\rangle$-module.
\end{lemma}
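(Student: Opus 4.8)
The plan is to reduce to the case $\frat(G)=1$ and then to identify $G$ with a monolithic primitive solvable group $V\rtimes\langle x\rangle$ in which $|x|$ is prime, after which the faithfulness and irreducibility of $V$ and the value of $p$ are automatic. First I would apply Lemma~\ref{nofrat} and replace $G$ by $G/\frat(G)$, which is still $2$-generated, solvable and non-nilpotent (a finite group is nilpotent as soon as its Frattini quotient is). So assume $\frat(G)=1$. Then $F:=\fit(G)$ is a direct product of minimal normal subgroups of $G$, each elementary abelian, with $C_G(F)=F$, and every minimal normal subgroup $V$ of $G$ is complemented: not lying in $\frat(G)$, it is avoided by some maximal subgroup $M$, and $M\cap V\unlhd G$ forces $M\cap V=1$. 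Since $G$ is non-nilpotent, $F$ is not central.

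The core of the argument—which I expect to be the main obstacle—is to prove that $F$ is a single minimal normal subgroup $V$ with $C_G(V)=V$, i.e.\ that $G=V\rtimes H$ is monolithic and primitive. I would argue in two stages, both times producing, from any ``extra room'' in $F$, two minimal generating sets of $G$ that violate the MGSE property. First, $G$ has no central minimal normal subgroup: such an $N$ would have $|N|=p$ (Lemma~\ref{ciclo} makes $|N|$ a power of $p$, hence $p$) and $G=N\times G_0$, where $G_0\cong G/N$ is again $2$-generated, non-nilpotent, has $\frat(G_0)=1$ and satisfies MGSE (Lemma~\ref{quoz}); by induction on $|G|$, $G_0\cong V_0\rtimes\langle x_0\rangle$ with $|x_0|$ prime, and then analysing the generating pairs of $N\times G_0$—one coordinate of one pair whose non-neighbours are exactly the union of the two proper subgroups $N\times K$ (with $K$ a complement of $V_0$ in $G_0$) and a diagonal maximal subgroup, the two coordinates of the other pair placed one in each—gives the required violation. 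Second, if $G$ had two distinct minimal normal subgroups, or a minimal normal $V$ with $C_G(V)>V$, the same idea applies through the standard description of the generating pairs of a group with a completely reducible normal subgroup $F$ (a pair $w_1h_1,w_2h_2$ generates $G$ precisely when $\langle h_1,h_2\rangle$ covers $G/F$ and, chief factor by chief factor of $F$, the components $w_1,w_2$ are not simultaneously contained in a common complement, the obstruction living in the relevant $H^1$): one selects the components of the second pair inside a complement of the ``excess'' chief factor of a suitable section. Hence $G=V\rtimes H$ with $H\cong G/V$ acting faithfully and irreducibly on $V$.

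It remains to see $H=\langle x\rangle$ with $|x|$ prime. If $H$ is abelian it is cyclic, since an abelian group acting faithfully and irreducibly on a module is cyclic (Schur); the non-abelian case I would exclude by an exchange argument: pick $h_2\in H$ lying in a unique maximal subgroup $L_0$ of $H$, pick a generating pair $\{g_1,g_2\}$ of $G$ with $g_2=v_2h_2$ lying in a complement $M_\delta$ of $V$, observe that any $g$ with $\langle g,g_2\rangle=G$ must lie outside $VL_0$ and outside the (few) complements through $g_2$, and then build a second generating pair whose two members land in that forbidden union, contradicting MGSE. So $H=\langle x\rangle$; then $[V,x]=V$ forces $G'=V$, hence $G/G'\cong\langle x\rangle$ is a $p$-group by Lemma~\ref{ciclo}, i.e.\ $|x|=p^a$. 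Finally $a=1$: if $a\ge 2$ then $H^1(\langle x\rangle,V)=0$, all complements of $V$ are conjugates $\langle x\rangle^{v}$ ($v\in V$), and with $0\ne v_2\in V$ and $v^{\ast}=(x^p-1)^{-1}v_2$ (note $x^p-1$ is invertible) the minimal generating sets $\{x,\,v_2x^p\}$ and $\{(v^{\ast})^{-1}x\,v^{\ast},\,u\,x^p\}$ (any $u\ne v_2$) of $G$ fail the MGSE property, since the second one lies inside $\langle x\rangle^{v^{\ast}}\cup V\langle x^p\rangle$, no element of which is adjacent to $v_2x^p$. Thus $|x|=p$, and $V$, being the unique minimal normal subgroup with $C_G(V)=V$, is a faithful irreducible $\langle x\rangle$-module.
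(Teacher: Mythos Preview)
Your approach is completely different from the paper's, and much longer. The paper gives essentially a three-line proof: it invokes \cite[Theorem 1.7]{forb}, which says that for a $2$-generated solvable non-nilpotent group $G$, either $G/\frat(G)$ already has the desired form $V\rtimes\langle x\rangle$, or the generating graph $\Gamma(G)$ fails to be a cograph and hence contains an induced path $x_1-x_2-x_3-x_4$. In the second case the two generating pairs $A=\{x_1,x_2\}$ and $B=\{x_3,x_4\}$ immediately violate MGSE, since neither $x_3$ nor $x_4$ is adjacent to $x_1$. All of the structural work you outline is thus hidden inside the cited cograph theorem.

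Your direct structural attack is a reasonable plan and has the virtue of being self-contained, but several of the intermediate steps are only sketches and would need real work to complete. The reduction to $\frat(G)=1$ and the final step excluding $|x|=p^a$ with $a\ge2$ are fine (your explicit pair $\{x,\,v_2x^p\}$ versus $\{(v^\ast)^{-1}xv^\ast,\,ux^p\}$ does give a clean MGSE violation, since $v_2x^p=\big((v^\ast)^{-1}xv^\ast\big)^p$). The weak points are: (i) the argument that $\fit(G)$ is a \emph{single} minimal normal subgroup with trivial centraliser is only described as ``the same idea applies through the standard description of the generating pairs'', with no concrete construction of the violating pair; and (ii) the exclusion of non-abelian $H$ relies on choosing $h_2\in H$ lying in a \emph{unique} maximal subgroup of $H$, but you give no reason such an element exists in an arbitrary $2$-generated solvable group, nor do you specify the second generating pair. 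In the paper these two points are exactly what the cograph classification (or, in the later flow of the paper, Lemma~\ref{prima}) handles; reproducing them by hand is possible but is where the genuine difficulty lies, and your sketch does not yet cross that threshold.
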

\begin{proof}Let $G$ be a 2-generated non-nilpotent solvable group. By \cite[Theorem 1.7]{forb} either $G/\frat(G)\cong V\rtimes \langle x \rangle,$ where $x$ has prime order and $V$ is a faithful irreducible
	$\langle x\rangle$-module, or the generating graph $\Gamma(G)$ of $G$ is not a cograph. In the second case, $\Gamma(G)$ contains four vertices $x_1, x_2, x_3, x_4$ such that the subgroup of $\Gamma(G)$ induced by them is the the four-vertex path $x_1-x_2-x_3-x_4.$ This implies that $A=\{x_1,x_2\}$ and
	$B=\{x_3,x_4\}$ fail the MGSE property, since there is no $b\in A$ such that $\langle x_1,b\rangle=G.$
\end{proof}

\begin{lemma}\label{prima}Assume that $H$ is a non-cyclic finite solvable group and let $V$ be a faithful irreducible $H$-module. Then
	$G=V\rtimes H$ does not satisfy the MGSE property.
\end{lemma}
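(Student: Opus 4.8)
The plan is to exploit the freedom in choosing generators modulo $V$ afforded by Gasch\"utz's lemma (Lemma \ref{gas}), combined with the fact that $H$ is non-cyclic, to manufacture two minimal generating sets of $G$ that fail the MGSE property. First I would fix a minimal generating set $h_1, \dots, h_d$ of $H$ with $d = d(H) \geq 2$ (here $d \geq 2$ because $H$ is non-cyclic). Since $V$ is a faithful irreducible $H$-module, $G = V \rtimes H$ and one checks, using the irreducibility and faithfulness of $V$ together with the standard cohomological/complement-counting argument, that $d(G) = d(H)$ or $d(G) = d(H)+1$; the precise value will dictate the bookkeeping but not the idea. For concreteness suppose $d(G) = d$ (the case $d(G) = d+1$ is handled by padding with an extra generator drawn from $V$, exactly as in the proof of Lemma \ref{quoz}).

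The key construction is as follows. Choose $h_1, \dots, h_d$ generating $H$, and separately choose $h_1', h_2, \dots, h_d$ also generating $H$, where $h_1'$ is picked so that $h_1'$ and $h_1$ differ in a way that will obstruct the exchange — for instance, arrange that $\langle h_1', h_2 \rangle \ne H$ while still $\langle h_1', h_2, \dots, h_d \rangle = H$; such a choice exists because $H$ is non-cyclic and hence has $2$-generated quotient behaviour that is not cograph-like, or more elementarily because we have $d \geq 2$ generators to play with and only need to spoil one pairwise sub-generation. By Lemma \ref{gas}, lift these to minimal generating sets $X = \{x_1, \dots, x_d\}$ and $Y = \{y_1, \dots, y_d\}$ of $G$ with $x_i \equiv h_i$, $y_1 \equiv h_1'$, $y_i \equiv h_i$ for $i \geq 2$ modulo $V$. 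Now I would show $X$ and $Y$ fail the MGSE property at the index $i$ corresponding to $x_1$: for every $y_j \in Y$, the subgroup $\langle y_j, x_2, \dots, x_d \rangle$ projects modulo $V$ to $\langle \bar y_j, h_2, \dots, h_d \rangle$, and by the choice above this is a proper subgroup of $H$ for every $j$ (the projections $\bar y_j$ run over $h_1', h_2, \dots, h_d$, and $\langle h_j, h_2, \dots, h_d\rangle$ for $j \geq 2$ omits $h_1$ hence is proper provided we also arranged $h_1 \notin \langle h_2, \dots, h_d\rangle$, which holds since the $h_i$ form a minimal generating set). Hence none of the exchanged sets generates $G$.

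The main obstacle I anticipate is the interaction between the module part and the group part when $d(G) = d(H) + 1$: one must make sure that the extra generator (which can be taken in $V$) does not accidentally rescue the generation of a proper subgroup of $H$ — but since it lies in $V \leq \ker(G \to H)$, its projection is trivial and the argument above goes through verbatim after replacing $d$ by $d+1$ and inserting the $V$-generator in both $X$ and $Y$. A secondary point requiring care is the existence of the auxiliary generating tuple $h_1', h_2, \dots, h_d$ of $H$ with $\langle h_1', h_2, \dots, h_d\rangle = H$ but no single element of $\{h_1', h_2, \dots, h_d\}$ together with $\{h_2, \dots, h_d\}$ regenerating $H$: in the genuinely delicate case $d(H) = 2$ this reduces to producing two edges $\{h_1, h_2\}$ and $\{h_1', h_2'\}$ of the generating graph of $H$ with no edge between $\{h_1\}$ and $\{h_1', h_2'\}$, which is available because $H$ is non-cyclic and its generating graph is not complete (e.g. $\bar H = H/\frat(H)$ contains a proper subgroup, giving non-adjacent pairs). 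I would close the argument by invoking Lemma \ref{quoz} only if I instead chose to push everything down from $G$ to a convenient quotient; but as set up above the direct construction suffices and no appeal to Lemma \ref{quoz} is needed.
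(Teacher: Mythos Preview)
Your argument has a genuine gap. You construct $Y$ so that its image modulo $V$ is a generating set $(h_1',h_2,\dots,h_d)$ of $H$, and then claim that for every $j$ the projection $\langle \bar y_j, h_2,\dots,h_d\rangle$ is a proper subgroup of $H$. But for $j=1$ the projection is exactly $\langle h_1',h_2,\dots,h_d\rangle = H$, since this is what you required in order for $Y$ to generate $G$ in the first place. So the projection argument cannot rule out the exchange $y_1$ for $x_1$, and Gasch\"utz gives you no control over whether $\langle y_1,x_2,\dots,x_d\rangle$ equals $G$ or merely a complement of $V$. Your later $d=2$ variant does not escape this: what you are really asking for there is a pair of generating $2$-sets of $H$ that already fail the exchange property \emph{in $H$}. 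But $H$ may well satisfy MGSE---take $H=C_p\times C_p$, where the generating pairs are vector-space bases and the exchange axiom holds trivially---so this reduction cannot work in general.

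The paper's proof exploits the $V$-part rather than the $H$-part. It first uses the Lucchini--Menegazzo theorem to get $d(G)=d(H)=d$, then cites results guaranteeing generating sets of two very particular shapes: $X=\{h_1n_1,h_2,\dots,h_d\}$ with all but the first coordinate lying in $H$ itself, and $Y=\{h_1,h_2n_2,\dots,h_dn_d\}$ with the first coordinate lying in $H$ itself. Now try to replace $h_1n_1$ in $X$ by some $y\in Y$: if $y=h_1$ you get $\langle h_1,\dots,h_d\rangle=H$, which misses $V$; if $y=h_in_i$ with $i\ge 2$ you land inside $\langle h_2,\dots,h_d\rangle V$, which misses $h_1$ since $\{h_1,\dots,h_d\}$ is a minimal generating set. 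The two obstructions live on different sides of the semidirect product, and that is exactly what makes the argument go through even when $H$ (such as $C_p\times C_p$) has no internal obstruction of its own.
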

\begin{proof}
Let $d=d(H).$ By the main theorem in \cite{lm}, $d(G)=d(H).$ Assume $H=\langle h_1,\dots,h_d\rangle.$ By \cite[Lemma 2]{lmsymb}, it is not restrictive to assume that there exists $n\in N$ such that
$G=\langle h_1n_1,h_2,\dots,h_d\rangle.$  Moreover, by \cite[Proposition 2.2]{ln} and its proof, there exist $n_2,\dots,n_d\in N$ such that $G=\langle h_1,h_2n_2,\dots,h_dn_d\rangle.$ We claim that the generating sets $X=\{h_1n_1,h_2,\dots,h_d\}$ and $Y=\{h_1,h_2n_2,\dots,h_dn_d\}$ fail the MGSEP property. Indeed assume that there exists $y\in Y$ such that $\langle y,h_2,\dots,h_d\rangle=G.$ Since $\langle h_1,\dots,h_d\rangle=H<G,$ it must be $y=h_in_i$ for some $2\leq i\leq d.$ But this would implies $\langle h_in_i,h_2,\dots,h_d\rangle
\leq \langle h_2,\dots,h_d\rangle N< HN=G.$
\end{proof}

\begin{lemma}\label{corona}
Let $G$ be a finite non-nilpotent solvable group. If $G$ satisfies the MGSE property, then
$G/\frat(G)\cong N^\delta \rtimes H,$ where $H$ is cyclic of prime order, $N$ is an irreducible $H$-module and $C_H(V)=1.$
\end{lemma}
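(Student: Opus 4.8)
The plan is to reduce, via Lemmas \ref{nofrat} and \ref{quoz}, to analysing the structure of $G/\frat(G)$ directly and to show it must have the crown-based shape $N^\delta \rtimes H$ with $H$ cyclic of prime order. First I would replace $G$ by $\bar G = G/\frat(G)$, which still satisfies the MGSE property and is still non-nilpotent and solvable. By Corollary \ref{nilpo} (via Lemma \ref{ciclo}) the abelianization $\bar G/\bar G'$ is a $p$-group, and more importantly every cyclic quotient of $\bar G$ is a $p$-group; I would use this to control which quotients of $\bar G$ can appear. The standard structure theory of finite solvable groups with trivial Frattini subgroup says that $\soc(\bar G)$ is a direct product of minimal normal subgroups, each an irreducible module for $\bar G$ over a field of prime order, and $\bar G$ embeds into the direct product of the corresponding affine groups. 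The goal is to force $\bar G$ to have a \emph{single} non-central chief factor type and to force the complement to be cyclic of prime order.

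The key step is to pass to an appropriate primitive quotient. Since $\bar G$ is non-nilpotent with $\frat(\bar G)=1$, there is a minimal normal subgroup $N$ that is not central, so $\bar G$ has a quotient $Q = N^k \rtimes H$ which is a \emph{monolithic primitive} solvable group — more precisely, I would take a chief series and a maximal subgroup not containing $N$, getting a primitive quotient $V \rtimes H$ with $V$ an irreducible faithful $H$-module and $H$ a proper quotient of $\bar G$ acting faithfully on $V$. By Lemma \ref{quoz} this quotient $V \rtimes H$ still satisfies the MGSE property. Now Lemma \ref{prima} applies: if $H$ were non-cyclic, then $V \rtimes H$ would fail the MGSE property, a contradiction. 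Hence $H$ is cyclic, and being a quotient of $\bar G$ it is a $p$-group; but $H$ acts faithfully and irreducibly on $V$, so $H$ is cyclic acting faithfully irreducibly, which combined with non-nilpotency (so $H \neq 1$) and the fact that a faithful irreducible action of a cyclic $p$-group on an irreducible module over a prime field forces $H$ to have prime order (the kernel of the action of any proper subgroup would be nontrivial only if... ) — here I would invoke Lemma \ref{cog} in the 2-generated case and a direct argument in general: a nontrivial cyclic $p$-group acting faithfully and irreducibly on a module over $\FF_\ell$ must be cyclic of order $p$ once we know every cyclic quotient is a $p$-group forces the relevant eigenvalue structure; more cleanly, any subgroup $\langle x^p\rangle$ acts reducibly, and an MGSE-type argument as in Lemma \ref{ciclo} rules out $|H| = p^2$ or larger. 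This identifies $H$ as cyclic of prime order $p$ and $N$ (the minimal normal subgroup, $=V$ in the primitive quotient) as an irreducible $H$-module with $C_H(N)=1$.

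Finally I would reconstruct $\bar G$ itself from these primitive quotients. All non-central chief factors of $\bar G$ must be of the same isomorphism type as $H$-modules: if $\bar G$ had two non-isomorphic non-central minimal normal subgroups, or a non-central chief factor whose associated primitive quotient had complement not equal to this fixed $H$ of order $p$, one obtains two different primitive quotients $V_1 \rtimes H_1$ and $V_2 \rtimes H_2$ with $H_i$ cyclic of prime order $p_i$; I would then combine generating sets coming from the two factors (using Lemma \ref{gas} to lift modulo the relevant normal subgroups, exactly in the style of the proof of Lemma \ref{corona}'s predecessors) to exhibit two minimal generating sets of $\bar G$ failing the exchange property, contradicting Lemma \ref{quoz}. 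Hence $\bar G$ has a unique non-central chief factor type, its non-nilpotent part is governed by a single $H \cong C_p$, and the standard crown/primitive-epimorphic-image description of solvable groups with trivial Frattini subgroup then gives $\bar G \cong N^\delta \rtimes H$ with $N$ an irreducible $H$-module and $C_H(N)=1$, where $\delta$ is the number of complemented copies of that chief factor.

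The main obstacle I expect is the last step: showing that the "same primitive quotient for all non-central chief factors" conclusion really forces the global product form $N^\delta \rtimes H$ rather than some more complicated extension, and in particular ruling out the simultaneous presence of a central $p$-chief factor that is not absorbed into $\frat$ — here one must use carefully that $\frat(\bar G)=1$ together with the fact that $\bar G/\bar G'$ is a $p$-group (so there are no central chief factors of order prime to $p$ sitting outside the derived subgroup) and that $N$-complements can be chosen compatibly. The crown-based-power machinery (as in the references \cite{lm}, \cite{ln} already cited) should do this, but verifying that the MGSE hypothesis is exactly what prevents "misaligned" complements is the delicate point.
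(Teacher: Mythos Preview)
Your reduction to $\frat(G)=1$ and the first moves --- writing $\fit(\bar G)=N_1\times\cdots\times N_t$ with complement $H$, applying Lemma \ref{prima} to each primitive quotient $N_i\rtimes H/C_i$ to force $H/C_i$ cyclic, and deducing via Lemma \ref{ciclo} that $H$ is an abelian $p$-group --- match the paper exactly. The trouble is the next step, and here there is a genuine gap. The assertion that a cyclic $p$-group acting faithfully and irreducibly over a prime field must have order $p$ is simply false (e.g.\ $C_4$ on $\FF_5$, or $C_9$ on $\FF_{19}$), and the alternatives you sketch (``$\langle x^p\rangle$ acts reducibly'', ``an MGSE-type argument as in Lemma \ref{ciclo}'') do not rescue it: reducibility of a subgroup action says nothing, and no variant of Lemma \ref{ciclo} separates $|H/C_i|=p$ from $|H/C_i|=p^2$ at the level of a single primitive quotient. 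Worse, even if each $H/C_i$ had order $p$, that only makes the full complement $H$ elementary abelian, not cyclic of order $p$; your ``assembly'' paragraph recognises this but offers only an unspecified construction.

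The paper closes both gaps with one device that you are missing. After getting $H$ an abelian $p$-group, it first proves $d(H)\le 2$ by an explicit MGSE failure: choose $i$ with $[H,N_i]\neq 1$, use $(|H|,|N_i|)=1$ so that $\langle hn\rangle=\langle h,n\rangle$, and in $X=N_iH$ exhibit two generating sets of size $d(H)$ that fail exchange whenever $d(H)\ge 3$. Then comes the decisive move: let $J$ contain one index per $H$-isomorphism class among the $N_i$ and set $Y=\bigl(\prod_{j\in J}N_j\bigr)H$. A direct computation of the maximal subgroups of $Y$ gives $\frat(Y)=1$, and Gasch\"utz's results \cite{ga} (using that the $N_j$, $j\in J$, are pairwise non-$H$-isomorphic and that $d(H)\le 2$) give $d(Y)=2$. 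Now Lemma \ref{cog} applies directly to $Y$ and delivers $|J|=1$ and $|H|=p$ simultaneously; since $|J|=1$ means all $N_i$ are $H$-isomorphic to a single $N$, the shape $\bar G\cong N^\delta\rtimes H$ with $C_H(N)=1$ follows at once. Your proposed ad hoc constructions for $|H|=p$ and for the uniqueness of the chief factor type are both attempts to do by hand what this passage through the $2$-generated quotient $Y$ and Lemma \ref{cog} accomplishes cleanly; without that reduction to a $2$-generated group you cannot invoke Lemma \ref{cog}, and that is exactly what makes the argument work.
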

\begin{proof}It follows from Lemma \ref{nofrat} and the fact that $G$ is nilpotent if and only if $G/\frat(G)$ is nilpotent, that we may assume $\frat(G)=1.$ 
	
	But then  we may write $\fit(G) = N_1 \times \dots \times N_t$ as a product of abelian minimal normal subgroups of $G.$ Since $\frat(G)=1,$ $\fit(G)$ has a complement $H$ in $G.$ Let $C_i=C_H(N_i)$ and $M_i=\prod_{i\neq j}N_j.$ Then $M_iC_i=C_G(N_i)$ is a normal sugroup of $G$ and $G/M_iC_i \cong N_i\rtimes H/C_i$. By Lemma \ref{prima}, $H/C_i$ is cyclic. Since $C_G(\fit(G))=\fit(G),$
	$$H\cong \frac{G}{\fit(G)}\cong \frac{G}{\bigcap_{1\leq i\leq t}C_G(N_i)}\leq \prod_{1\leq i\leq t}\frac{G}{C_G(N_i)}\cong \prod_{1\leq i\leq t}\frac{H}{C_i}.$$ Hence $H$ is abelian and it follows from Lemma \ref{ciclo} that  $H$ is a $p$-group.
There exists $1\leq i\leq t$ such that
	$[H,N_i]\neq 1$ (otherwise $G$ would be abelian and consequently $G=\fit(G))$. This implies in particular $(|N_i|,p)=(|N_i|,|H|)=1.$ As we noticed before, $H/C_i$ is cyclic, so we may assume $H=\langle h_1,h_2,\dots,h_u\rangle$ with $h_j\in C_i$ if $j\geq 2.$ Let $X=N_iH$ and $1\neq n\in N.$ Notice that, since $(|C_i|,|N_i|)=1,$ $\langle hm\rangle=\langle h, m\rangle$ for every $h\in H$ and $m\in N.$ In particular, if $u:=d(H)\geq 3,$ then  $A=\{h_1,\dots,x_{u-2},h_{u-1}n,h_u\}$ and $B=\{h_1,\dots,x_{h-2},h_{u-1},h_un\}$ are two minimal generating sets of $X$ which fail the MGSE property, since there is no $b\in B$ such that $G=\langle h_1,\dots,h_{u-2},b,h_u\rangle$. Since $X$ in an epimorphic image $G$, it follows from Lemma \ref{quoz} that the assumption $u\geq 3$ leads to a contradiction. Hence $d(H)\leq 2.$
	
	Let $J$ be a subset of $I=\{1,\dots,t\}$ with the property that for any $i\in I$ there exists one and only one $j\in J$ such that $N_i$ is $H$-isomorphic to $N_{j}.$ Let $Y=(\prod_{j\in J}N_j)H.$ By Lemma \ref{quoz}, $Y$ satisfies the MGSE property. The maximal subgroups of $Y$ are:
	\begin{enumerate}
		\item $(\prod_{j\in J})M_j\frat(H);$
		\item $(\prod_{j\neq k}M_j)H^y$ with $k\in J$ and $y\in M_j.$
	\end{enumerate}
This implies  that $$\frat(Y)=\bigcap_{j\in J}C_{\frat(H)}(N_j)=\bigcap_{i\in I}C_{\frat(H)}(N_i)=\frat(G)=1.$$
Moreover it follows from \cite[Satz 2 and Satz 3]{ga} that $d(Y)=2.$ But then we may deduce  from Lemma \ref{cog} that $|J|=1$ and $|H|=p.$ 
\end{proof}
	
\begin{lemma}\label{viceversa}
	Let $V$ be a faithful irreducible $\langle x\rangle$-module with $|x|=p$ a prime. Then the semidirect product $G=V^\delta\rtimes \langle x\rangle$ satisfies the MGSE  property.
\end{lemma}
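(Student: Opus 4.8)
The plan is to show that the generating sets of $G=V^\delta\rtimes\langle x\rangle$ of minimal size all exchange correctly, by first pinning down $d(G)$ and the shape of a minimal generating set. Write $H=\langle x\rangle$, so $|H|=p$ and $V$ is a faithful irreducible $H$-module of some dimension $m$ over $\mathbb F_q$ with $q$ a power of a prime $r\neq p$. The socle $N:=V^\delta$ is the unique minimal-normal-containing part: $\fit(G)=N$ and $G/N\cong H$ is cyclic of order $p$, while $\Phi(G)=1$ (one checks $\bigcap C_{\ }( \dots)=1$ as in the proof of Lemma \ref{corona}). A standard crown/Gasch\"utz computation (cf. \cite{lm}, \cite{lmsymb}, \cite{ga}) gives $d(G)=\max\{2,\delta+1\}$ when $m=1$, and more generally $d(G)=\max\{2,\lceil \delta/m\rceil + \varepsilon\}$ for a correction term $\varepsilon\in\{0,1\}$; what matters is only the following structural fact, which I would isolate as the first step.

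\textbf{Step 1 (structure of minimal generating sets).} Set $d=d(G)$. I claim that every generating set $\{a_1,\dots,a_d\}$ of $G$ of size $d$ has exactly one element $a_i$ with $a_i\notin N$ (equivalently, mapping to a generator of $G/N$), and the remaining $d-1$ elements lie in $N$; moreover, after reordering, $\{a_1,\dots,a_{i-1},a_{i+1},\dots,a_d\}$ together with $a_i$ projects onto a generating configuration of the $H$-module $N$ in the sense that the $H$-submodule generated by the $N$-components is all of $N$ modulo the ``diagonal'' contribution of $a_i$. The reason no element outside $N$ appears twice: if $a_i,a_j\notin N$ then $a_ia_j^{-1}\in N$ and one of them is redundant for generating $G/H\cdot(\text{stuff})$ — more precisely, since $(|N|,|H|)=1$ we have $\langle v h\rangle = \langle v\rangle\langle h\rangle$ for $v\in N$, $h\in H$, so each generator outside $N$ contributes its $H$-part and its $N$-part independently, and a second such generator can be replaced by its $N$-part without loss. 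This coprimality remark is exactly the one already used in the proof of Lemma \ref{corona}, and it is the engine of the whole argument.

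\textbf{Step 2 (the exchange).} Take two minimal generating sets $X=\{x_1,\dots,x_d\}$ and $Y=\{y_1,\dots,y_d\}$; by Step 1, after reindexing, $x_1\notin N$ and $x_2,\dots,x_d\in N$, and likewise exactly one $y_j\notin N$, say $y_1\notin N$ and $y_2,\dots,y_d\in N$. Fix $1\le i\le d$; I must produce $y_j$ with $\langle x_1,\dots,x_{i-1},y_j,x_{i+1},\dots,x_d\rangle=G$. If $i=1$: replacing $x_1$ by $y_1$ keeps a generator of $G/N$ present and the same $N$-parts $x_2,\dots,x_d$; since $y_1$ has the same order $p$ image and differs from $x_1$ only inside $N$, and $\langle x_2,\dots,x_d\rangle^G$-closure was all of $N$ relative to $x_1$'s diagonal term, a short coprimality computation (again $\langle vh\rangle=\langle v\rangle\langle h\rangle$) shows it is all of $N$ relative to $y_1$'s diagonal term too, so $\langle y_1,x_2,\dots,x_d\rangle=G$. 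If $i\ge 2$, I instead use $y_j=y_1\notin N$: then $\{x_1,y_1,x_3,\dots,x_d\}$ contains two elements outside $N$, hence by the redundancy observation of Step 1 it still generates $G$ (the module-generation condition is only made easier by having an extra diagonal element), so $\langle x_1,\dots,x_{i-1},y_1,x_{i+1},\dots,x_d\rangle=G$. Either way the required $y_j$ exists, so $X$ and $Y$ do not fail the MGSE property.

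\textbf{Main obstacle.} The delicate point is Step 1 together with the $i\ge 2$ subcase of Step 2: one must be sure that \emph{every} size-$d$ generating set really does have the ``one element off $N$, the rest inside $N$'' shape, and that swapping in a second off-$N$ element cannot destroy generation because the $H$-module $N$ (a direct sum of $\delta$ copies of the irreducible $V$) is already generated by the available $N$-parts. This is where I would invoke the precise module-theoretic counting behind $d(G)$ — the Gasch\"utz--Lucchini--Menegazzo type formula and the fact, from \cite{ln}, that in such a crown extension one may choose the $N$-components freely subject only to the rank condition on $N$ as $H$-module. Everything else is the coprime-order splitting $\langle vh\rangle=\langle v\rangle\langle h\rangle$, which trivialises the interaction between the top and bottom of the extension. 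I expect the writeup to be short once Step 1 is stated cleanly; the temptation to avoid is getting bogged down in an exact formula for $d(G)$, which is never actually needed.
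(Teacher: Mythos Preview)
Your Step~1 is false, and this brings down the whole argument. Take the smallest non-trivial instance $\delta=1$, $p=3$, $V=\mathbb F_7$ with $x$ acting as multiplication by $2$. Then $G=V\rtimes\langle x\rangle$ has order $21$, $d(G)=2$, and $\{(1)x,\ (0)x^{2}\}$ is a minimal generating set in which \emph{both} elements lie outside $N$. The coprimality identity you lean on, $\langle vh\rangle=\langle v\rangle\langle h\rangle$ (or $=\langle v,h\rangle$), requires $v$ and $h$ to commute; here $x$ acts faithfully on $V$, so they do not, and indeed $(1)x$ has order $3$ while $\langle (1),x\rangle=G$ has order $21$. (In the proof of Lemma~\ref{corona} this identity is only applied to elements $h_j$ with $j\ge 2$, which are chosen inside the centraliser $C_i$, so it is legitimate there but not here.) Once Step~1 fails, the case split in Step~2 collapses: there is no canonical ``the element off $N$'' to swap in, and the ``redundancy'' argument for two off-$N$ elements is unfounded.

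The paper takes a completely different route that avoids analysing the shape of individual generating sets. Because $\langle x\rangle$ is cyclic of prime order and $V$ is faithful irreducible, one may identify $V$ with the additive group of a finite field $F$ and $x$ with an element of $F^{\times}$ of order $p$. One then checks (via \cite[Propositions 2.1 and 2.2]{LM2}) that a family $\{(v_{j,1},\dots,v_{j,\delta})x_j : 1\le j\le \delta+1\}$ generates $G$ if and only if the $(\delta+1)\times(\delta+1)$ matrix over $F$ whose $j$-th column is $(x_j-1,\ v_{j,1},\dots,v_{j,\delta})^{T}$ is invertible. Thus minimal generating sets of $G$ correspond exactly to bases of $F^{\,\delta+1}$, and the MGSE property is nothing other than the Steinitz exchange lemma for vector-space bases. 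No description of which generators lie in $N$ is needed, and the argument is uniform in $\delta$.
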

\begin{proof}	
We may identify $V$ with the additive group of a finite field $F$ and $x$ with an element of order $p$ of the multiplicative group of $F.$ Notice that $d(G)=\delta+1.$ Moreover it follows from \cite[Proposition 2.1 and 2.2]{LM2} that
$$\langle(v_{1,1},\dots,v_{1,\delta+1})x_1,\dots,(v_{\delta+1,1},\dots,v_{\delta+1,\delta+1})x_{\delta+1}\rangle=G$$ if and only if
$$\det \begin{pmatrix}
	x_1-1&\cdots&x_{\delta+1}-1\\
	v_{1,1}&\cdots&v_{\delta+1,1}\\
	\vdots&\vdots&\vdots\\
		v_{1,\delta+1}&\cdots&v_{\delta+1,\delta+1}
\end{pmatrix}\neq 0.$$
In particular, to prove that two generating sets $$\begin{aligned}X_1=\{(v_{1,1,1},\dots,v_{1,\delta+1,1})x_{1,1},\dots,
(v_{1,\delta+1,1},\dots,v_{1,\delta+1,1})x_{1,\delta+1}\}\\
X_2=\{(v_{2,1,1},\dots,v_{2,\delta+1,1})x_{2,1},\dots,
(v_{2,\delta+1,1},\dots,v_{2,\delta+1,1})x_{2,\delta+1}\}
\end{aligned}$$
satisfy the MGSE property it suffices to notice that, given the two-matrices
$$A_1=\begin{pmatrix}x_{1,1}-1&\cdots&x_{1,\delta+1}-1\\	v_{1,1,1}&\cdots&v_{1,\delta+1,1}\\
\vdots&\vdots&\vdots\\
v_{1,1,\delta+1}&\cdots&v_{1,\delta+1,\delta+1}
\end{pmatrix},\quad A_2=\begin{pmatrix}x_{2,1}-1&\cdots&x_{2,\delta+1}-1\\	v_{2,1,1}&\cdots&v_{2,\delta+1,1}\\
\vdots&\vdots&\vdots\\
v_{2,1,\delta+1}&\cdots&v_{2,\delta+1,\delta+1}
\end{pmatrix},
$$
for any $i\in \{1,\dots,\delta+1\}$ there exists $j\in \{1,\dots,\delta+1\}$ such that the matrix obtained replacing the $i$-th column of $A_1$ with the $j$-th column of $A_2$ is still invertible.
\end{proof}

We may summarize the results obtained in this section in the following statement:
\begin{thm}
	A non-cyclic finite solvable group $G$ satisfies the MGSE property if and only if $G/\frat(G)\cong N^\delta \rtimes H,$ where $H$ is cyclic of prime order, $N$ is an irreducible $H$-module and $C_H(V)=1.$
\end{thm}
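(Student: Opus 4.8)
The plan is to obtain the final theorem by assembling the lemmas proved in this section, checking carefully that the passage to $G/\frat(G)$ causes no loss. First I would reduce to the Frattini-free case: by Lemma~\ref{nofrat}, $G$ satisfies the MGSE property if and only if $G/\frat(G)$ does, and $G$ is nilpotent if and only if $G/\frat(G)$ is, so it suffices to prove the statement for a non-cyclic finite solvable group with trivial Frattini subgroup, identifying such a group with its own quotient $G/\frat(G)$. I would then split according to whether $G$ is nilpotent. If $G$ is nilpotent (and non-cyclic), Corollary~\ref{nilpo} says the MGSE property holds exactly when $G$ is a $p$-group; and a non-cyclic $p$-group $P$ with $\frat(P)=1$ is elementary abelian of rank $\delta\geq 2$, which is of the form $N^{\delta}\rtimes H$ with $H=1$ of ``prime order'' only degenerately --- so strictly one should read the theorem as covering the non-nilpotent case, with the nilpotent case being the $p$-group case of Corollary~\ref{nilpo}. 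I would state this reading explicitly so the dichotomy in the theorem is unambiguous.

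For the non-nilpotent direction, the forward implication is exactly Lemma~\ref{corona}: if the non-nilpotent solvable group $G$ satisfies the MGSE property, then $G/\frat(G)\cong N^{\delta}\rtimes H$ with $H$ cyclic of prime order $p$, $N$ an irreducible $H$-module, and $C_H(N)=1$. (I would also note that $C_H(N)=1$ together with $|H|=p$ forces $p\nmid|N|$ and makes $V:=N^{\delta}$ a faithful $H$-module, so the ``$C_H(V)=1$'' in the statement is consistent with the ``$C_H(V)=1$'' appearing there.) For the converse I would invoke Lemma~\ref{viceversa}: a group of the shape $V^{\delta}\rtimes\langle x\rangle$ with $V$ a faithful irreducible $\langle x\rangle$-module and $|x|=p$ prime does satisfy the MGSE property; and then lift back through Lemma~\ref{nofrat}, since any $G$ with $G/\frat(G)$ of this shape inherits the property. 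Combining the two directions with the reduction gives the theorem.

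The main obstacle — and the place where I would be most careful — is bookkeeping around the Frattini quotient and around whether the ``nilpotent'' case is really subsumed. Lemma~\ref{corona} is stated for \emph{non-nilpotent} $G$, and Lemma~\ref{viceversa} produces genuinely non-nilpotent examples whenever $H\neq 1$ acts nontrivially, so the honest statement of the theorem is: a non-cyclic finite solvable group satisfies the MGSE property if and only if it is a $p$-group or $G/\frat(G)\cong N^{\delta}\rtimes H$ with $H$ cyclic of prime order acting faithfully and irreducibly on $N$ (equivalently, on $V=N^{\delta}$). I would therefore present the proof as: (i) reduce to $\frat(G)=1$ via Lemma~\ref{nofrat}; (ii) if $G$ is nilpotent, apply Corollary~\ref{nilpo}; (iii) if $G$ is non-nilpotent, apply Lemma~\ref{corona} for necessity and Lemma~\ref{viceversa} for sufficiency; (iv) undo the reduction. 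No new computation is needed — every ingredient has already been established — but the statement as printed should be read with the $p$-group case folded in, and I would flag this explicitly rather than leave it implicit.
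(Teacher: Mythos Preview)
Your proposal is correct and matches the paper's approach exactly: the paper offers no separate argument for this theorem, presenting it explicitly as a summary of the results already obtained in the section (Lemmas~\ref{nofrat}, \ref{corona}, \ref{viceversa} and Corollary~\ref{nilpo}), which is precisely the assembly you describe. Your observation that the $p$-group case is not literally captured by the displayed form $N^{\delta}\rtimes H$ with $|H|$ prime and $C_H(N)=1$ is a valid point about the precision of the statement rather than a gap in the argument; the paper leaves this implicit, and your suggested reading (either a $p$-group, or $G/\frat(G)$ of the stated shape) is the correct way to reconcile the theorem with Corollary~\ref{nilpo}.
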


Combining  the previous theorem with the main result of \cite{ak}, it turns out that if a finite group $G$ satisfies the MGSE property, then all the minimal generating sets of $G$ have the same size, or equivalently $\Delta(G)=\Gamma(G).$ However the converse is not true. For example, let $G$ be the solvable subgroup of $\perm(5)$ generated by $x=(2,3,4,5)$ and $y=(1,2,3,5,4).$ Then all the minimal generating sets of $G$ has size $2,$ however $A=\{x^2,xy\}$ and $B=\{x,y\}$ are two generating sets that fails the MGSE property, since $\langle x^2,x\rangle$ and $\langle x^2,y\rangle$ are proper subgroups of $G$.

\

We conjecture that a finite group which satisfies the MGSE property must be solvable. The following is a partial result supporting this conjecture.

\begin{thm}
	Let $G$ be a non-nilpotent finite group. If $G$ satisfies the MGSE property, then $G$ contains a unique maximal normal subgroup, say $N$, and $G/N$ is cyclic of order $p$. In particular $G$ is non-perfect and $G/G^\prime$ is a cyclic $p$-group.
\end{thm}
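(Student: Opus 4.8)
The plan is to reduce to the solvable case and then invoke the classification of Lemma \ref{corona}, together with the structural results already proved for quotients. By Lemma \ref{nofrat} we may replace $G$ by $G/\frat(G)$ and so assume $\frat(G)=1$; note that $G$ is still non-nilpotent, since nilpotency is detected by $G/\frat(G)$. The key first step is to show that $G$ is solvable under the MGSE hypothesis: this is exactly the conjecture stated just above, so I expect the actual argument here to parallel the proof of Lemma \ref{corona} but without assuming solvability upfront. Concretely, I would analyse $\soc(G) = N_1 \times \cdots \times N_t$, the product of the minimal normal subgroups. If some $N_i$ were non-abelian, one writes $N_i = S^k$ with $S$ simple; using that $N_i$ is generated by $d(N_i)=2$ elements (by the theorem of Aschbacher–Guralnick type results on generation of direct powers of simple groups, or just $d(S)=2$) together with Lemma \ref{gas}, I would try to build two minimal generating sets of a suitable quotient of $G$ that fail the MGSE property — mimicking the non-nilpotent argument in Lemma \ref{corona} where the cyclic-quotient obstruction was exploited. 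The cleanest route: show $G/G'$ must be a cyclic $p$-group by the same mechanism as Lemma \ref{ciclo} (which does \emph{not} use solvability — it only uses Lemmas \ref{gas} and \ref{quoz}), hence $G$ is non-perfect; then $G$ has a proper normal subgroup of prime index, and one iterates or uses minimality.

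More precisely, here is the order of steps I would carry out. First, apply Lemma \ref{ciclo}: since $G$ is non-cyclic (a non-nilpotent group is non-cyclic) and satisfies MGSE, every cyclic quotient of $G$ is a $p$-group; in particular $G/G'$ is a $p$-group. If $G/G'$ is non-trivial, it is a non-trivial $p$-group, so $G \ne G'$ and $G$ is non-perfect. Second, I must rule out $G$ perfect: if $G=G'$, then $G/G'$ is trivial, which is vacuously a $p$-group, so Lemma \ref{ciclo} alone does not exclude it — this is where I expect the main work. To handle the perfect case I would look at $\soc(G)$; if $G$ is perfect and non-nilpotent then $\soc(G)$ has a non-abelian component, and I would argue as in Lemma \ref{corona}/\ref{prima}, using that a faithful module or a direct power of a simple group admits the "diagonal" generating configurations needed to manufacture a MGSE failure via Lemma \ref{gas} and Lemma \ref{quoz}. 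Third, once $G$ is known non-perfect, let $N$ be a maximal normal subgroup; then $G/N$ is simple. If $G/N$ were non-abelian simple it would be perfect, contradicting that $G/N$ — being a quotient of $G$, hence itself MGSE by Lemma \ref{quoz}, and non-cyclic — must have $\,(G/N)/(G/N)'$ a $p$-group by Lemma \ref{ciclo}, forcing $(G/N)'=G/N$, i.e. $G/N$ perfect: so I would instead invoke the perfect-case analysis again (a non-abelian simple group patently fails MGSE, e.g. by exhibiting two generating pairs failing exchange, or by the cograph failure in \cite{forb} analogue). Hence $G/N$ is cyclic of prime order $p$. Fourth, for \emph{uniqueness} of the maximal normal subgroup: if $N_1, N_2$ were distinct maximal normal subgroups, then $G/(N_1\cap N_2)$ embeds in $G/N_1 \times G/N_2$, a product of two cyclic groups of prime order, hence is abelian and non-cyclic unless the two primes coincide and... — actually if $G/N_1 \cong G/N_2 \cong C_p$ then $G/(N_1\cap N_2)$ is elementary abelian of rank $2$, hence non-cyclic abelian, so it is nilpotent but not a $p$-group only if — wait, it \emph{is} a $p$-group, so I instead derive a contradiction with non-nilpotency of $G$: $G/(N_1\cap N_2)$ being abelian means $G' \le N_1 \cap N_2$, and then $G/G'$ surjects onto $C_p \times C_p$, still consistent; the real contradiction comes from Lemma \ref{corona}: applying it to $G$ (now known solvable, since $G/N$ cyclic and we can induct on $|G|$ with $N$ itself MGSE-free or solvable) forces $G/\frat(G) \cong N^\delta \rtimes H$ with $H$ cyclic of prime order, and such a group has a \emph{unique} maximal normal subgroup, namely $N^\delta$.

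The main obstacle, and the place where the argument is genuinely hard rather than bookkeeping, is excluding the perfect / non-abelian-simple-section case without the classification of finite simple groups doing all the work — i.e. proving directly that a finite non-abelian simple group, or more generally a perfect group with non-abelian socle, fails the MGSE property. I would attack this exactly as Lemma \ref{prima} attacks the affine case: take $N = \soc(G)$ (or a suitable chief factor) which is a direct power $S^k$; use Lemma \ref{gas} to lift a generating set of $G/N$ in two different ways, one "twisted" by an element supported on the first coordinate of $S^k$ and one twisted on another coordinate, and check that no single exchange can repair the first generating set because the relevant projections to $S^k$ become deficient — the combinatorial heart being the same determinant/transversality phenomenon that appears in Lemma \ref{viceversa}, now used in the opposite direction to \emph{obstruct} exchange. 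If one is willing to invoke CFSG-dependent facts (every non-abelian simple group is $2$-generated, and the detailed structure of $P_2$ from \cite{colva}), this step becomes a short verification; the contribution here is noticing that the $2$-generation plus Lemma \ref{gas} already produces the failure. Once perfect groups are out, everything funnels into Lemma \ref{corona} and the trivial observation that $N^\delta \rtimes C_p$ has a unique maximal normal subgroup, with quotient $C_p$, and that $G/G' \cong C_p$ is then a cyclic $p$-group — giving all three assertions of the theorem at once.
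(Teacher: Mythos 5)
Your skeleton coincides with the paper's reduction: by Lemma \ref{quoz} every simple quotient of $G$ inherits the MGSE property, by Lemma \ref{ciclo} every cyclic quotient (in particular $G/G'$) is a $p$-group, so everything hinges on showing that a finite non-abelian simple group fails the MGSE property. But you leave exactly that step as a speculative plan. You call it ``the main work'' and propose an uncompleted analysis of $\soc(G)$, Gasch\"utz lifts and ``diagonal configurations'' for perfect groups with non-abelian socle; none of this is needed, and none of it is actually carried out. The paper disposes of it in one line: by \cite[Corollary 1.10]{forb} the generating graph of a finite non-abelian simple group $S$ contains an induced path $x_1-x_2-x_3-x_4$, and then the generating pairs $A=\{x_1,x_2\}$ and $B=\{x_3,x_4\}$ fail the exchange at $x_2$, since neither $\langle x_1,x_3\rangle$ nor $\langle x_1,x_4\rangle$ equals $S$ --- the same device already used in the proof of Lemma \ref{cog}. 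You name this tool only in a parenthesis and never commit to it; as written, the simple-group case (and hence the whole theorem) is not proved. Note also that Lemma \ref{quoz} means you only ever have to kill simple \emph{quotients}; there is no perfect-group or non-abelian-socle case to analyse, so your stated ``main obstacle'' is a misidentification of where the difficulty lies.

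The second genuine defect is the uniqueness step. You correctly observe that two distinct maximal normal subgroups would only produce a quotient isomorphic to $C_p\times C_p$, which is a $p$-group and satisfies MGSE, so no contradiction arises from that quotient alone; you then escape by invoking Lemma \ref{corona} after asserting that $G$ is solvable ``by induction on $|G|$ with $N$ itself MGSE-free or solvable''. This is not available: solvability of MGSE groups is precisely the open conjecture stated just before the theorem, Lemma \ref{corona} is proved only under a solvability hypothesis, and the MGSE property is inherited by quotients (Lemma \ref{quoz}), not by normal subgroups, so nothing whatever is known about $N$. The theorem is deliberately formulated as a partial result that must be proved \emph{without} solvability, and the paper obtains all of its conclusions from the two facts above (no non-abelian simple quotient, hence every maximal normal subgroup contains $G'$ and has prime index; and $G/G'$ is a $p$-group by Lemma \ref{ciclo}); your detour through the solvable classification is a wrong approach for this statement, not merely an inelegant one.
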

\begin{proof}
	By Lemmas \ref{quoz} and \ref{ciclo}, it suffices to prove that a finite non-abelian simple group does not satisfies the MGSE property. This follows from the fact that the generating graph of a finite non-abelian simple group contains a 4-path \cite[Corollary 1.10]{forb}.
\end{proof}

\end{document}